\def\u{\mathfrak{u}}
\def\g{\mathfrak{g}}
\def\h{\mathfrak{h}}
\def\R{\mathbb{R}}
\def\Z{\mathbb{Z}}
\def\N{\mathbb{N}}
\def\ad{\operatorname{ad}}
\def\tr{\operatorname{tr}}
\def\I{\operatorname{Id}}
\def\alt{\raise1pt\hbox{$\bigwedge$}}
\def\pint{\langle \cdotp,\cdotp \rangle }
\theoremstyle{plain}
\newtheorem{thm}{\bf Theorem}[section]
\newtheorem*{thm*}{\bf Theorem}
\newtheorem{cor}[thm]{\bf Corollary}
\newtheorem{prop}[thm]{\bf Proposition}
\theoremstyle{definition}
\newtheorem{defi}[thm]{\bf Definition}
\newtheorem{ejemplo}[thm]{\bf Example}
\theoremstyle{remark}
\newtheorem{obs}[thm]{\bf Remark}
\newcommand{\ri}{{\rm (i)}}
\newcommand{\rii}{{\rm (ii)}}
\title{On certain class of locally conformal symplectic structures of the second kind}
\author{M. Origlia}
\date{}
\address[Marcos Origlia]{KU Leuven Campus Kulak Kortrijk\\ E. Sabbelaan 53\\ BE-8500 Kortrijk, Belgium; and FaMAF-CIEM, Universidad Nacional de C\'{o}rdoba \\ X5000HUA C\'{o}rdoba\\ Argentina}
\thanks{This work was partially supported by the Research Foundation Flanders (Project G.0F93.17N) and CONICET, SECyTUNC and ANPCyT (Argentina)}
\email{marcosmiguel.origlia@kuleuven.be\\marcosoriglia@gmail.com}
\subjclass[2010]{53D05, 22E60, 22E25, 53C15, 22E40}
\keywords{Locally conformal symplectic structure, Lie algebra, Lie group, lattice, solvmanifold}
\begin{document}

\begin{abstract}

We study locally conformal symplectic (LCS) structures of the second kind on a Lie algebra.
We show a method to build new examples of Lie algebras admitting LCS structures of the second kind starting with a lower dimensional Lie algebra endowed with a LCS structure and a suitable representation. Moreover, we characterize all LCS Lie algebras obtained with our construction.
Finally, we study the existence of lattices in the associated simply connected Lie groups in order to obtain compact examples of manifolds admitting this kind of structure. 



\end{abstract}
\maketitle

\section{Introduction}

A \textit{locally conformal symplectic} 
structure (LCS for short) on the manifold $M$ is a non degenerate $2$-form $\omega$ such that 
there exists an open cover $\{U_i\}$ and smooth functions $f_i$ on $U_i$ such that 
$\omega_i=\exp(-f_i)\omega$ 
is a symplectic form on $U_i$. This condition is equivalent to requiring that
\begin{equation}\label{lcs}
d\omega=\theta\wedge\omega
\end{equation} 
for some closed $1$-form $\theta$, called the Lee form. 
The pair $(\omega, \theta)$ will be called a LCS structure on $M$.
It is well known that if $(\omega,\theta)$ is a LCS structure on $M$, then $\omega$ is symplectic if and only if $\theta=0$. 
Furthermore, $\theta$ is uniquely determined by equation \eqref{lcs}, but there is not an explicit formula for the Lee form.
If $\omega$ is a non degenerate $2$-form on $M$, with $\dim M\ge 6$, such that \eqref{lcs} holds for some $1$-form $\theta$ then $\theta$ is automatically closed and therefore $M$ is LCS.

\smallskip

According to Vaisman (see \cite{V}) there are two different types of LCS structures. If $(\omega, \theta)$ is a LCS 
structure on $M$, a vector field $X$ is called an infinitesimal automorphism of 
$(\omega,\theta)$ if $\textrm{L}_X\omega=0$, where $\textrm{L}$ denotes the Lie derivative. This 
implies $\textrm{L}_X\theta=0$ as well and, as a consequence, $\theta(X)$ is a constant function on $M$. We consider  $\mathfrak{X}_\omega(M)=\{X\in\mathfrak{X}(M): \textrm{L}_X\omega=0\}$ which is a subalgebra of $\mathfrak{X}_\omega(M)$, then the map $\theta|_{\mathfrak{X}_\omega(M)} : \mathfrak{X}_\omega(M) \to \R$ is a well defined Lie algebra morphism called the Lee morphism. 
If there exists an infinitesimal automorphism $X$ such that $\theta(X)\neq 0$, the LCS structure $(\omega,\theta)$ is said to be of {\em the first kind}, and it is of {\em the second kind} otherwise. This condition is equivalent to the Lee morphism being either surjective or identically zero.
In the literature, there is more information about LCS structures of the first kind, for example, in \cite{V} Vaisman gives relations with contact geometry and it is also proves that a manifold with a LCS structure of the first kind admits distinguished foliations. On the other hand, LCS structures of the second kind are less understood.

\smallskip

There is another way to distinguish LCS structures, to do this,
we can deform the de Rham differential $d$ to obtain the adapted 
differential operator 
\[d_\theta \alpha= d\alpha -\theta\wedge\alpha,\]
for any differential form $\alpha \in \Omega^*(M)$.
Since $\theta$ is $d$-closed, this operator satisfies $d_\theta^2=0$, thus it defines the {\em Morse-Novikov cohomology} 
$H_\theta^*(M)$ of $M$ relative to the closed $1$-form $\theta$ (see \cite{N1,N2}). Note that if $\theta$ is exact then $H_\theta^*(M)\simeq H_{dR}^*(M)$. It is known that if $M$ is a compact oriented $n$-dimensional manifold, then 
$H_\theta^0(M)= H_\theta^n(M)=0$ for any non exact closed $1$-form $\theta$ (see for instance 
\cite{GL,Ha}). For any LCS structure $(\omega,\theta)$ on $M$, the $2$-form $\omega$ 
defines a cohomology class $[\omega]_\theta\in H_\theta^2(M)$, since 
$d_\theta\omega=d\omega-\theta\wedge \omega=0$. 
The LCS structure $(\omega,\theta)$ is said to be {\em exact} if $\omega$ is $d_\theta$-exact or $[\omega]_\theta =0$, i.e., $\omega= d\eta-\theta\wedge \eta$ for some $1$-form $\eta$, and it is {\em non-exact} if $[\omega]_\theta \neq0$. 
It was proved in \cite{V} that if the LCS structure $(\omega,\theta)$
is of the first kind on $M$ then $\omega$ is $d_\theta$-exact, i.e., $[\omega]_\theta=0$. But the converse is not true. 
Recently in \cite{AOT} other cohomologies for LCS manifolds  were introduced, inspired by the almost Hermitian setting. More precisely, the authors define the LCS-Bott-Chern cohomology and the LCS-Aeppli cohomology on any compact LCS manifold, and compute them for some LCS solvmanifolds in low dimensions.

\

In last years, special attention has been devoted to the study of left invariant LCS structures on Lie groups (see for instance \cite{AO1,ABP,AOT,BM}), with very nice results in the case of LCS structures of the first kind. In this work we focus on LCS structures of the second kind on Lie algebras and solvmanifolds.



We recall that a LCS structure $(\omega,\theta)$ on a Lie group $G$ is called left invariant 
if $\omega$ is left invariant, and this easily implies that $\theta$ is also left invariant. 
Accordingly, we say that a Lie algebra $\g$ admits a {\em locally conformal symplectic} (LCS) 
structure if there exist $\omega\in\alt^2\g^*$ and $\theta \in \g^*$, with $\omega$ non degenerate 
and $\theta$ closed, such that \eqref{lcs} is satisfied. 

As in the case of manifolds we have that a LCS structure $(\omega,\theta)$ on a Lie algebra $\g$ 
can be of the first kind or of the second kind. Indeed, let us denote by $\g_\omega$ the set of 
infinitesimal automorphisms of the LCS structure, that is, 
\begin{equation}\label{autom}
\g_\omega = \{X\in\g: \textrm{L}_X\omega=0\} = \{X\in\g: \omega([X,Y],Z)+\omega(Y,[X,Z])=0 \, \text{for all} \, Y,Z\in\g\}
\end{equation}
where $\textrm{L}$ denotes the Lie derivative. Note that $\g_\omega \subset \g$ is a Lie subalgebra, thus the restriction of $\theta$ to 
$\g_\omega$ is a Lie algebra morphism called the {\em Lee morphism.} The LCS structure $ (\omega,\theta)$ is said to be {\em of the first kind} if the Lee morphism is surjective, and {\em of the second kind} if it is identically zero.

\smallskip

For a Lie algebra $\g$ and a closed $1$-form $\theta\in\g^*$ we also have the Morse-Novikov cohomology 
$H_\theta^*(\g)$ defined by the differential operator $d_\theta$ on $\alt^* \g^*$ defined by \[d_\theta \alpha= d\alpha 
-\theta\wedge\alpha.\] 


As in manifolds, we have that a LCS structure $(\omega,\theta)$ on a Lie algebra is said to be exact if $[\omega]_\theta =0$ or non-exact if $[\omega]_\theta\neq0$. 

It was proved in \cite{BM} that:
\begin{prop}\label{1kind=exact}
	If the Lie algebra $\g$ is unimodular, a LCS structure on $\g$ is of the first kind if and only if it is exact.
\end{prop}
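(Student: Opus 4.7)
My plan is to treat the two implications separately, with the observation that only the converse (exact $\Rightarrow$ first kind) will actually use the unimodularity assumption.

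The forward implication ``first kind $\Rightarrow$ exact'' should be quick. If $X\in\g_\omega$ satisfies $\theta(X)=1$ (after rescaling), I would set $\eta := -\iota_X\omega$ and verify $\omega = d\eta - \theta\wedge\eta$ by expanding $0 = \textrm{L}_X\omega$ via Cartan's magic formula and using $d\omega = \theta\wedge\omega$. This step uses no assumption on $\g$.

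For the converse, I would begin with $\omega = d\eta - \theta\wedge\eta$ for some $\eta\in\g^*$ and set $2n = \dim\g$. Nondegeneracy of $\omega$ lets me define a unique $X\in\g$ via $\iota_X\omega = -\eta$, and a short Cartan computation yields
\begin{equation*}
\textrm{L}_X\omega = (\theta(X)-1)\omega.
\end{equation*}
Everything then reduces to showing $\theta(X)=1$: this would simultaneously give $X\in\g_\omega$ and $\theta(X)\neq 0$, i.e.\ that $(\omega,\theta)$ is of the first kind.

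To pin down $\theta(X)$ I would pass to the top form $\omega^n$ and compute $\textrm{L}_X\omega^n$ in two different ways. On the one hand the Leibniz rule gives $\textrm{L}_X\omega^n = n(\theta(X)-1)\omega^n$. On the other hand, since $\alt^{2n+1}\g^* = 0$, Cartan's formula collapses to $\textrm{L}_X\omega^n = d(\iota_X\omega^n) = -n\,d(\eta\wedge\omega^{n-1})$, which is an exact top-degree form. The genuine obstacle---and the only place where unimodularity is used---is the next step: in a unimodular Lie algebra the differential $d\colon\alt^{2n-1}\g^*\to\alt^{2n}\g^*$ is identically zero, equivalently $H^{2n}(\g)\neq 0$. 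This follows from the standard identity $\textrm{L}_Y(e^1\wedge\cdots\wedge e^{2n}) = -(\tr\ad Y)\,e^1\wedge\cdots\wedge e^{2n}$, which vanishes for every $Y\in\g$ by unimodularity, combined with the isomorphism $\g\cong\alt^{2n-1}\g^*$ given by contraction with a volume form. Invoking it kills the exact form, so $(\theta(X)-1)\omega^n = 0$, and nondegeneracy of $\omega^n$ forces $\theta(X)=1$, as desired.
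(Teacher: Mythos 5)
Your argument is correct and complete. Note that the paper itself does not prove this proposition: it is quoted from \cite{BM} without proof, and your two-step argument (the Cartan-formula computation giving $\textrm{L}_X\omega=(\theta(X)-1)\omega$ for $\iota_X\omega=-\eta$, followed by the use of unimodularity to force $d=0$ on $\alt^{2n-1}\g^*$ and hence $\theta(X)=1$) is essentially the standard proof given there, with the forward implication indeed requiring no hypothesis on $\g$.
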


\medskip

LCS structures of the first kind on Lie algebras are better understood because they are related with other important geometric structures (see for instance \cite{V} or more recently \cite{BM}). On the other hand not much is known about Lie algebras with a LCS structure of the second kind. 
In \cite{ABP} the authors study three different type of constructions of LCS Lie algebras. One corresponds to exacts LCS Lie algebras. The second one establishes a link between cosymplectic Lie algebras in dimension $2n-1$ and non-exact LCS structure in dimension $2n$. 
The third one is related to the existence of Lagrangian ideals (see \cite{ABP} for more details).
In \cite{AO1} we study LCS structures on almost abelian Lie groups, and we exhibit examples of solvmanifolds with LCS structure of the second kind in any dimension greater than or equal to $6$. We do not know many other explicit examples of solvmanifolds with a LCS structure of the second kind. Therefore, we consider that it would be very interesting to find new examples of Lie algebras and solvmanifolds admitting LCS structures of the second kind, hopefully they might be used to understand better this kind of structures.

In this work we deal with this problem and inspired by ideas of \cite{ABP} we provide another construction of LCS Lie algebras of the second kind which is different from those given in \cite{ABP}. 
After we provide this construction in Theorem \ref{pi=tita-ro} two question arise naturally:

\medskip

Question $1$: \textit{Given a LCS structure of the second kind on a Lie algebra, can it be obtained from our construction?} 

\medskip

Question $2$: \textit{Do there exist examples of LCS Lie algebras constructed by Theorem \ref{pi=tita-ro} such that the associated simply connected Lie group admits lattices?}

\medskip

We answer affirmatively question $2$. Indeed, we exhibit lattices for some associated simply connected Lie groups obtaining explicit examples of solvmanifolds admitting LCS structures of the second kind. 
And concerning question $1$, we give a nice characterization of Lie algebras built with our construction. Moreover, we recover most of the known examples in dimension $4$. 


 
\medskip

The outline of this article is as follows. In Section $2$ we prove that a left invariant LCS structure of the second kind on a Lie group induces a LCS structure of the second kind on any compact quotient by a discrete subgroup (see Theorem \ref{2tipo_cociente}). This allows us to study this geometric structure at the Lie algebra level.
In Section $3$  we give a method to build new examples of Lie algebras admitting LCS structures of the second kind starting with a Lie algebra endowed with a LCS structure and a compatible representation (see Theorem \ref{pi=tita-ro}), pointing out when the resulting Lie algebra is unimodular. 
We also have a converse (see Theorem \ref{converse}) obtaining a nice characterization of LCS Lie algebras admitting a non degenerate abelian ideal contained in the kernel of the Lee form.
In Section $4$ we show the wide range of our construction by reobtaining most of the known examples of LCS Lie algebras on the second kind in dimension $4$.
We also exhibit examples of Lie algebras in higher dimension starting with a $4$-dimensional LCS Lie algebra. Moreover we give a complete list of $4$-dimensional LCS Lie algebras which can be used to produce examples in higher dimension.
Finally, in Section $5$ we study the existence of lattices in the associated Lie groups and we give an explicit construction of a family of solvmanifolds admitting a LCS structure of the second kind proving that they are pairwise non homeomorphic.

\


\section{Solvmanifolds with LCS structure of the second kind}

Let us consider LCS structures on Lie algebras, or equivalently left invariant LCS structures on Lie groups. 
If the Lie group is simply connected then any left invariant LCS structure turns out to be globally conformal 
to a symplectic structure, which is essentially equivalent to having a symplectic structure on the Lie group. 
Therefore we will study compact quotients of such a Lie group by lattices.
Recall that a discrete subgroup 
$\Gamma$ of a simply connected Lie group $G$ is called a \textit{lattice} if the quotient 
$\Gamma\backslash G$ is compact. In this case  we have that 
$\pi_1(\Gamma\backslash G)\cong \Gamma$. The quotient $\Gamma\backslash G$ is called a solvmanifold if 
$G$ is solvable and it is called a nilmanifold if $G$ is nilpotent.

It is clear that a left invariant LCS structure on a Lie group $G$ induces a  LCS structure on any quotient $\Gamma\backslash G$, which will be non 
simply connected and therefore the inherited  LCS structure is ``strict''.

It is important to mention that if a Lie group admits a lattice then such Lie group must be unimodular, according to \cite{Mi}. Besides this necessary condition, there is no general criterion to determine whether a given  unimodular solvable Lie group admits a lattice. It is a very difficult problem in itself.
However, there is such a criterion for nilpotent Lie groups. Indeed, Malcev proved in \cite{Ma} that a nilpotent Lie group admits a lattice if and only if its Lie algebra has a rational form, that is, there exists a basis of the Lie algebra such that the corresponding structure constants are all rational. More recently, Bock studied in \cite{B} the existence of lattices in simply connected solvable Lie groups up to dimension 6 and he gave a criterion for the existence of lattices in almost abelian Lie groups.

Concerning LCS structures on a solvmanifold $\Gamma\backslash G$ which arise from a LCS structure on $\g=\text{Lie}(G)$, it is easy to see that if the LCS structure  on $\g$ is of the first kind, then the  induced LCS structure on the quotient $\Gamma\backslash G$ is of the first kind as well. 
We will prove in the next result that the same happens for LCS structures of the second kind, i.e., a LCS structure of the second kind on $\g$ induces a LCS structure of the second kind on any compact quotient $\Gamma\backslash G$.

\begin{thm}\label{2tipo_cociente}
	Let $\Gamma\backslash G$ be a solvmanifold and $\g=Lie(G)$. If $(\omega, \theta)$ is a LCS structure on $\g$ of the second kind, then the LCS structure induced on the solvmanifold $\Gamma\backslash G$ is of the second kind.
\end{thm}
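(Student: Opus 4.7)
The plan is to prove the contrapositive: assume that the induced LCS structure on $\Gamma\backslash G$ is not of the second kind and derive a contradiction with the hypothesis on $\g$. Since $\Gamma\backslash G$ is compact, $G$ is unimodular and hence so is $\g$, so Proposition \ref{1kind=exact} will be available at the Lie algebra level. Suppose then that there exists a vector field $\bar X$ on $\Gamma\backslash G$ with $\textrm{L}_{\bar X}\omega=0$ and $\theta(\bar X)=c$ a nonzero constant. A short calculation using Cartan's formula together with $d\omega=\theta\wedge\omega$ gives
\[
d_\theta(i_{\bar X}\omega)=d(i_{\bar X}\omega)-\theta\wedge i_{\bar X}\omega=-i_{\bar X}(\theta\wedge\omega)-\theta\wedge i_{\bar X}\omega=-c\,\omega
\]
on $\Gamma\backslash G$, so $\omega$ is $d_\theta$-exact on the solvmanifold. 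The remaining task is to descend this $d_\theta$-exactness to the Lie algebra level.

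The main tool is symmetrization. Unimodularity of $G$ provides a bi-invariant volume form $\mu$ on $G$ which descends to a well-defined volume form on the compact quotient $\Gamma\backslash G$. Averaging against $\mu$ assigns to every $\alpha\in\Omega^k(\Gamma\backslash G)$ a left-invariant form $\alpha^{\mathrm{sym}}\in\alt^k\g^*$. I would verify the standard properties: $\omega^{\mathrm{sym}}=\omega$ and $\theta^{\mathrm{sym}}=\theta$, since both are already left-invariant; $(\beta\wedge\alpha)^{\mathrm{sym}}=\beta\wedge\alpha^{\mathrm{sym}}$ whenever $\beta$ is left-invariant; and $d$ commutes with symmetrization. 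The last property is the Nomizu-style computation: by Stokes' theorem on the compact manifold $\Gamma\backslash G$, the integrals $\int_{\Gamma\backslash G}(Xf)\mu$ vanish for every left-invariant vector field $X$ and smooth $f$, precisely because bi-invariance of $\mu$ forces $\textrm{L}_X\mu=0$. Combining these three facts, $d_\theta$ itself commutes with symmetrization.

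Applying this to $\eta=i_{\bar X}\omega$ yields the identity
\[
d_\theta(\eta^{\mathrm{sym}})=(d_\theta\eta)^{\mathrm{sym}}=(-c\,\omega)^{\mathrm{sym}}=-c\,\omega
\]
inside $\alt^2\g^*$, so $[\omega]_\theta=0$ in $H_\theta^2(\g)$, i.e., the LCS structure on $\g$ is exact. Since $\g$ is unimodular, Proposition \ref{1kind=exact} forces $(\omega,\theta)$ on $\g$ to be of the first kind, contradicting the hypothesis. This completes the contrapositive.

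The one mildly delicate step is verifying that symmetrization commutes with the exterior derivative on $\Gamma\backslash G$, but this is a classical argument whose only essential input is the bi-invariance of $\mu$, i.e., unimodularity of $G$ --- a property automatically guaranteed by the existence of the lattice $\Gamma$. Everything else is a formal calculation with Cartan's formula and the operator $d_\theta$.
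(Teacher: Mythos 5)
Your proof is correct, and it reaches the conclusion by a route that is the exact contrapositive of the paper's, with the two imported ingredients proved from scratch instead of cited. The paper argues forward: second kind on $\g$ plus unimodularity gives $0\neq[\omega]_\theta\in H^2_\theta(\g)$ by Proposition \ref{1kind=exact}; the injectivity of $H^2_\theta(\g)\to H^2_\theta(\Gamma\backslash G)$, quoted from Kasuya, transfers non-exactness to the quotient; and Vaisman's implication (first kind $\Rightarrow$ $d_\theta$-exact) then forces the induced structure to be of the second kind. You run the same three steps in reverse: your Cartan-formula identity $d_\theta(i_{\bar X}\omega)=-c\,\omega$ is precisely the proof of Vaisman's implication, and your symmetrization is the standard proof of the injectivity the paper cites --- averaging against the bi-invariant volume gives a cochain-level left inverse to the inclusion $\alt^*\g^*\hookrightarrow\Omega^*(\Gamma\backslash G)$ that commutes with $d_\theta$, which is exactly where unimodularity (guaranteed by the existence of $\Gamma$) enters in both treatments. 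What your version buys is self-containedness and an explicit left-invariant primitive $-\tfrac1c\,(i_{\bar X}\omega)^{\mathrm{sym}}$ for $\omega$; what the paper's version buys is brevity. Both arguments ultimately rest on the same two pillars: Proposition \ref{1kind=exact} and the compatibility of Morse--Novikov cohomology under passage between $\g$ and the compact quotient.
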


\begin{proof}
	Let $(\omega,\theta)$ be a LCS structure of the second kind on $\g$. Since $\g$ is unimodular, it follows from \Cref{1kind=exact} that $(\omega,\theta)$ is not exact, i.e., $0\neq[\omega] \in H^2_\theta(\g)$. Let $(\hat\omega,\hat\theta)$ be the induced LCS structure on $\Gamma\backslash G$. According to \cite{K} there exists an injective map $i: H^2_\theta(\g) \to H^2_{\tilde\theta}(\Gamma\backslash G)$. This map arises from the natural inclusion of $\g$ into $\mathfrak X(\Gamma\backslash G)$.  Therefore  $0\neq[i(\omega)]=[\hat{\omega}] \in H^2_{\tilde\theta}(\Gamma\backslash G)$. Then $\hat\omega$ is non exact in $\Gamma\backslash G$ and it follows from \cite{V} that the LCS structure $(\hat\omega, \hat\theta)$ in the solvmanifold $\Gamma\backslash G$ is of the second kind.
\end{proof}

\begin{obs}
	Note that the LCS structure induced by Theorem \ref{2tipo_cociente} is indeed a non-exact LCS structure on $\Gamma\backslash G$.
\end{obs}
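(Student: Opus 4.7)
The plan is to show, via Morse--Novikov cohomology, that $(\hat\omega,\hat\theta)$ is non-exact on $\Gamma\backslash G$, and then invoke Vaisman's theorem that any first-kind LCS structure is $d_\theta$-exact. Since the Lee morphism takes values in $\R$ and its image is a linear subspace, either it vanishes (second kind) or it is surjective (first kind), so ruling out the first-kind possibility is enough to conclude.

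First I would pass non-exactness from $\g$ to $\Gamma\backslash G$. Because $G$ admits the lattice $\Gamma$, Milnor's theorem \cite{Mi} forces $G$, and hence $\g$, to be unimodular. Combined with the hypothesis that $(\omega,\theta)$ is of the second kind, Proposition \ref{1kind=exact} then gives $[\omega]_\theta \neq 0$ in $H^2_\theta(\g)$.

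Next, I would use that the natural inclusion of left-invariant forms $\bigwedge^*\g^* \hookrightarrow \Omega^*(\Gamma\backslash G)$ intertwines $d_\theta$ with $d_{\hat\theta}$, inducing a map $i\colon H^*_\theta(\g)\to H^*_{\hat\theta}(\Gamma\backslash G)$ that sends $[\omega]_\theta$ to $[\hat\omega]_{\hat\theta}$. To deduce $[\hat\omega]_{\hat\theta}\neq 0$, I would appeal to Kasuya's theorem from \cite{K}, which asserts the injectivity of this comparison map in the solvmanifold setting; combined with \cite{V} this shows that the induced structure cannot be of the first kind, hence must be of the second kind.

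The main obstacle lies in the second step: Kasuya's injectivity statement, being the Morse--Novikov analogue of Nomizu's theorem, typically requires hypotheses on $G$ or on the closed one-form $\theta$. If those hypotheses turned out to be restrictive, an alternative would be to pair $\hat\omega$ with an invariant $d_{\hat\theta}$-closed top form and integrate over $\Gamma\backslash G$ to contradict the existence of a hypothetical $d_{\hat\theta}$-primitive; but the cited injectivity result is expected to suffice in the full generality of the theorem.
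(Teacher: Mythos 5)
Your argument is correct and follows essentially the same route as the paper: the remark is just the intermediate conclusion already established inside the proof of Theorem \ref{2tipo_cociente}, namely that unimodularity (forced by the existence of the lattice, via \cite{Mi}) together with Proposition \ref{1kind=exact} gives $[\omega]_\theta\neq 0$ on $\g$, and Kasuya's injective comparison map $H^2_\theta(\g)\to H^2_{\hat\theta}(\Gamma\backslash G)$ from \cite{K} transports this to $[\hat\omega]_{\hat\theta}\neq 0$. Your closing worry about the hypotheses of Kasuya's injectivity is reasonable but moot here, since the paper invokes exactly that result in the same solvmanifold setting.
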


\begin{obs}
Note that in general a LCS structure of the second kind on a Lie algebra induces a LCS structure on the associated simply connected Lie group which is not necessarily of the second kind.
\end{obs}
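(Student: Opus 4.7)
The plan is to explain why the passage from $\mathfrak{g}$ to the simply connected Lie group $G$ can destroy the second-kind property: the Lie algebra level only sees left-invariant vector fields, while on $G$ there are many more candidate infinitesimal automorphisms, and the cohomological obstructions that force second kind at the Lie algebra level collapse on a contractible manifold.

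The first step is to exploit the fact that a simply connected solvable Lie group $G$ is diffeomorphic to $\R^n$, hence contractible. Therefore $H^*_{dR}(G)=0$ in positive degree, and the left-invariant Lee form $\theta$ is globally exact: $\theta=df$ for some $f\in C^\infty(G)$. Consequently $e^{-f}\omega$ is genuinely symplectic on $G$ and the whole Morse-Novikov complex is acyclic, $H^*_\theta(G)=0$. In particular $\omega$ is $d_\theta$-exact on $G$, so there is a global $1$-form $\eta$ with $\omega=d\eta-\theta\wedge\eta$. This is exactly the class of obstruction that, by \Cref{1kind=exact}, is forced to be nonzero for second-kind LCS structures on a unimodular $\g$.

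The second step is to build from $\eta$ an infinitesimal automorphism with nontrivial Lee value, showing $(\omega,\theta)$ is of the first kind on $G$. Define $X\in\mathfrak{X}(G)$ by $\iota_X\omega=-\eta$; this uses only non-degeneracy of $\omega$. A Cartan-formula computation gives
\[
L_X\omega=d\iota_X\omega+\iota_X(\theta\wedge\omega)=-d\eta+\theta(X)\omega-\theta\wedge\iota_X\omega=-(d\eta-\theta\wedge\eta)+\theta(X)\omega=(\theta(X)-1)\omega.
\]
Using the freedom $\eta\mapsto\eta+d_\theta h$ (and the fact that $H^1_\theta(G)=0$, so every $d_\theta$-closed $1$-form is of this form), one normalises $\eta$ so that the resulting $X$ satisfies $\theta(X)\equiv 1$. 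Then $L_X\omega=0$ and $\theta(X)=1$, so $X$ is an infinitesimal automorphism with nontrivial Lee image. Such an $X$ cannot be left-invariant, for otherwise it would witness the first kind on $\g$ itself, contradicting the hypothesis.

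The main difficulty is the normalisation step: $\theta(X)$ is a priori a function on $G$ and we need it to be a nonzero constant. Translating through $\iota_X\omega=-\eta$ and $\theta=df$, this becomes a first-order linear PDE on $G$ for the scalar $h$, essentially of Poisson-bracket type with $f$ in the symplectic structure $e^{-f}\omega$; on a contractible Lie group this is solvable but the argument is technical. For the purposes of a \emph{Remark} it is therefore cleaner to substantiate the statement by an explicit example: take one of the four-dimensional second-kind LCS Lie algebras produced in Section~4, coordinatise the associated simply connected Lie group as $\R^4$, write $\omega$, $\theta$ and $f$ in global coordinates, and exhibit by hand a non-left-invariant vector field $X$ on $G$ with $L_X\omega=0$ and $\theta(X)=1$, confirming that the induced LCS structure on $G$ is in fact of the first kind.
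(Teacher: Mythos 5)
The paper states this as a bare remark with no proof, so there is nothing to match your argument against; judged on its own merits, your proposal has a genuine gap at its central step. Your first step is fine: a simply connected solvable $G$ is diffeomorphic to $\R^n$, so $\theta=df$, $e^{-f}\omega$ is globally symplectic, and $H^*_\theta(G)\simeq H^*_{dR}(G)=0$, whence $\omega=d_\theta\eta$. The Cartan computation $L_X\omega=(\theta(X)-1)\omega$ for $\iota_X\omega=-\eta$ is also correct. But the normalisation making $\theta(X)\equiv1$ is exactly the content of the statement ``$d_\theta$-exact $\Rightarrow$ first kind,'' and the paper explicitly warns that this implication is false in general (exactness is only a necessary condition for the first kind). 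Asserting that the resulting transport equation for $h$ is ``solvable but technical'' on a contractible group is not a proof, and the fallback explicit example is announced but never produced. As written, the proposal establishes that the induced structure on $G$ is globally conformally symplectic and $d_\theta$-exact, which does not yet decide its kind.

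There is a much shorter and complete argument you could use instead. For any $X\in\g$ let $X^R$ be the corresponding right-invariant vector field on $G$; its flow consists of left translations, so $L_{X^R}\omega=0$ for the left-invariant $2$-form $\omega$, i.e.\ $X^R\in\mathfrak{X}_\omega(G)$. By the general theory $\theta(X^R)$ is then a constant, equal to its value at the identity, namely $\theta_e(X)$. Since $\theta\neq0$, one can choose $X$ with $\theta_e(X)\neq0$, so the Lee morphism of the induced structure on $G$ is surjective and the structure is of the first kind — even though no \emph{left-invariant} infinitesimal automorphism has this property when $(\omega,\theta)$ is of the second kind on $\g$. This also explains why the phenomenon disappears on $\Gamma\backslash G$ (Theorem 2.1): right-invariant fields do not descend to the quotient. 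I would recommend replacing your cohomological route by this observation, or at least by one fully worked coordinate example as you suggest.
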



\

\section{A method to construct LCS Lie algebras of the second kind}

In this section we give a method to build new examples of Lie algebras admitting a LCS structures of the second kind starting with a Lie algebra endowed with a LCS structure and a compatible representation (See Theorem \ref{pi=tita-ro}). Then we characterize all Lie algebras built with this method in Theorem \ref{converse}, we also point out when the Lie algebras constructed in Theorem \ref{pi=tita-ro} are unimodular which is a necessary condition to study lattices in the last section.

\

Let $\h$  be a Lie algebra, $(\omega,\theta)$ a LCS structure on $\h$, and let $(V,\omega_0)$ be a symplectic vector space of dimension $2n$. We consider a representation
\[\pi: \h \to \operatorname{End}(V).\]
Let $\g$ be the Lie algebra given by $\g=\h\ltimes_\pi V$, equipped with the non degenerate 2-form $\tilde\omega$ given by $\tilde\omega|_\h=\omega$ and $\tilde\omega|_V=\omega_0$. In particular $\tilde\omega(X,Y)=0$ for any $X\in\h$, $Y\in V$.
We define the $1$-form $\tilde\theta\in\g^*$ by $\tilde\theta|_\h=\theta$ and $\tilde\theta|_V=0$. 

We determine next when the pair $(\tilde\omega, \tilde\theta)$ is a LCS structure on the Lie algebra $\g$. Computing $d\tilde\omega=\tilde\theta\wedge\tilde\omega$ we can easily see that $(\tilde\omega, \tilde\theta)$ is a LCS structure if and only if the following condition is satisfied:
\begin{equation}\label{cons}
-\omega_0(\pi(X)Y,Z)+\omega_0(\pi(X)Z,Y)=\theta(X)\omega_0(Y,Z), 
\end{equation}
for $X\in\h$ and $Y,Z\in V$.

We denote by $S$ and $\rho$ the $\omega_0$-symmetric part and $\omega_0$-skew-symmetric part of $\pi$. More precisely, for each $X\in\h$, 
\[\pi(X)=S(X)+\rho(X),\]
where $S(X)$ is $\omega_0$-symmetric and $\rho(X)$ is $\omega_0$-skew-symmetric  with respect to the non degenerate $2$-form $\omega_0$, that is, $S(X)$ satisfies $\omega_0(S(X)Y,Z)=\omega_0(Y,S(X)Z)$ and $\rho(X)$ satisfies $\omega_0(\rho(X)Y,Z)=-\omega_0(Y,\rho(X)Z)$ for any $X\in\h$ and $Y,Z\in V$. This condition on $\rho$ is equivalent to saying that $\rho(X)\in\mathfrak{sp}(V,\omega_0)$ for any $X\in\h$.
It is easy to verify that \eqref{cons} holds if and only if $-2S(X)=\theta(X)\I$ for any $X\in\h$. 

\begin{defi}
With the notation above, if $\pi(X)=-\frac12\theta(X)\I+\rho(X)$ and $\rho(X)\in\mathfrak{sp}(V,\omega_0)$ for all $X\in\h$, then we say that $\pi$ is a \emph{LCS representation}.
\end{defi}
\begin{obs}
	Note that any LCS Lie algebra $\h$ admits a LCS representation. For example, taking $\rho=0$.
\end{obs}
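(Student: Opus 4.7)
The plan is to exhibit, for any LCS Lie algebra $(\h,\omega,\theta)$ and any choice of symplectic vector space $(V,\omega_0)$, a concrete map $\pi\colon\h\to\operatorname{End}(V)$ satisfying the definition of LCS representation with $\rho\equiv 0$. The natural candidate, as the statement hints, is $\pi(X):=-\tfrac{1}{2}\theta(X)\I$ for every $X\in\h$. The verification then reduces to two routine checks: that $\pi$ is indeed a Lie algebra homomorphism, and that the decomposition $\pi(X)=-\tfrac{1}{2}\theta(X)\I+\rho(X)$ satisfies $\rho(X)\in\mathfrak{sp}(V,\omega_0)$ for every $X\in\h$.

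For the homomorphism condition I would compute both sides of $\pi([X,Y])=[\pi(X),\pi(Y)]$. The left side equals $-\tfrac{1}{2}\theta([X,Y])\I$, while the right side equals $\tfrac{1}{4}\theta(X)\theta(Y)[\I,\I]=0$ since scalar multiples of the identity commute. Hence the required identity reduces to $\theta([X,Y])=0$ for all $X,Y\in\h$. This is exactly the content of $\theta$ being closed in the Lie algebra setting, since in the Chevalley--Eilenberg differential one has $d\theta(X,Y)=-\theta([X,Y])$, and $d\theta=0$ is part of the hypothesis that $(\omega,\theta)$ is an LCS structure on $\h$. The second condition is immediate: with $\rho\equiv 0$ one trivially has $\rho(X)=0\in\mathfrak{sp}(V,\omega_0)$.

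There is no serious obstacle here; the remark is essentially a sanity check verifying that the notion of LCS representation is non-vacuous and, in particular, that the construction of Theorem \ref{pi=tita-ro} can in principle be applied to every LCS Lie algebra. The only minor subtlety worth mentioning is that one must fix in advance a symplectic vector space $(V,\omega_0)$ on which $\pi$ acts, but such a space exists in every even dimension, so the resulting extension $\g=\h\ltimes_\pi V$ can be produced in any prescribed even-dimensional enlargement of $\h$. It is also worth noting that, while this trivial choice $\rho=0$ always works, it is the nontrivial choices of $\rho\in\mathfrak{sp}(V,\omega_0)$ (subject to $\pi$ being a representation) that make the construction interesting, so the remark is best read as a starting point rather than as the main output of the definition.
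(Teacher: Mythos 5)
Your proposal is correct and coincides with what the paper intends: the remark is justified precisely by taking $\pi(X)=-\tfrac{1}{2}\theta(X)\I$, and the only point requiring verification is that $\pi$ is a Lie algebra homomorphism, which follows from $\theta([X,Y])=-d\theta(X,Y)=0$ — the same use of closedness of $\theta$ that appears in the paper's proof of Theorem \ref{pi=tita-ro}. Nothing is missing.
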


Therefore we have the following result:

\begin{thm}\label{pi=tita-ro}
Let $\h$ be a Lie algebra with a LCS structure $(\omega,\theta)$, let $(V,\omega_0)$ be a $2n$-dimensional symplectic vector space and $\pi: \h \to \operatorname{End}(V)$ a representation. Let $\g=\h\ltimes_\pi V$ and $(\tilde\omega, \tilde\theta)$ given by $\tilde\omega|_\h=\omega$, $\tilde\omega|_V=\omega_0$, $\tilde\theta|_\h=\theta$ and $\tilde\theta|_V=0$. Then
$(\tilde\omega, \tilde\theta)$ is a LCS structure on $\g$ if and only if 
$\pi$ is a LCS representation. Moreover, $\rho: \h\to \mathfrak{sp}(V,\omega_0)$ is a representation and $(\tilde\omega, \tilde\theta)$ is a LCS structure of the second kind.
\end{thm}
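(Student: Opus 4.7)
My plan is to separate the statement into three independent claims and handle each in turn: the ``LCS iff LCS representation'' equivalence, the representation property of $\rho$, and the vanishing of the Lee morphism. The first two reduce to routine verifications using the semidirect product structure. The main obstacle --- though still fairly short --- will be the last one, which requires a direct computation of the Lie derivative on $V \times V$.

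For the LCS equivalence, I would expand $d\tilde\omega = \tilde\theta \wedge \tilde\omega$ on an arbitrary triple of vectors in $\g = \h \ltimes_\pi V$, splitting into four cases according to the number of entries in $\h$ versus $V$. The all-$\h$ case reduces to $d\omega = \theta \wedge \omega$ on $\h$. The all-$V$ case is trivial, since $V$ is abelian and $\tilde\theta|_V = 0$. The case with two entries in $\h$ and one in $V$ gives zero on both sides, because every bracket of such vectors lies in $\h$ or $V$ and $\tilde\omega$ pairs $\h$ with $V$ to zero, while the wedge side vanishes by $\tilde\theta(V) = 0$ and $\tilde\omega(\h, V) = 0$. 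The remaining case --- one entry in $\h$, two in $V$ --- reproduces condition \eqref{cons}, which the excerpt already shows is equivalent to $\pi$ being an LCS representation.

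The representation property of $\rho$ then follows by substitution. Writing $\pi(X) = -\tfrac{1}{2}\theta(X)\I + \rho(X)$ and expanding $\pi([X,Y]) = [\pi(X), \pi(Y)]$, the identity commutes out of the bracket on the right, and closedness of $\theta$ gives $\theta([X,Y]) = 0$, so the scalar terms on the left disappear. What remains is $\rho([X,Y]) = [\rho(X), \rho(Y)]$, and since $\rho(X) \in \mathfrak{sp}(V, \omega_0)$ by hypothesis, $\rho$ is a Lie algebra homomorphism into $\mathfrak{sp}(V, \omega_0)$.

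For the second kind property, I take an arbitrary $A \in \g_{\tilde\omega}$, decompose $A = X + Y$ with $X \in \h$ and $Y \in V$, and evaluate $L_A \tilde\omega = 0$ on a pair $(Y_1, Y_2) \in V \times V$. Since $V$ is abelian, $[A, Y_i] = \pi(X) Y_i \in V$, so the Lie derivative formula simplifies to $-\omega_0(\pi(X) Y_1, Y_2) - \omega_0(Y_1, \pi(X) Y_2)$. Decomposing $\pi(X) = -\tfrac{1}{2}\theta(X)\I + \rho(X)$ and using the $\omega_0$-skew-symmetry of $\rho(X)$, the two $\rho(X)$ contributions cancel, leaving $\theta(X)\, \omega_0(Y_1, Y_2)$. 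Non-degeneracy of $\omega_0$ on $V$ then forces $\theta(X) = \tilde\theta(A) = 0$, so the Lee morphism vanishes identically and $(\tilde\omega, \tilde\theta)$ is of the second kind.
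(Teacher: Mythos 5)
Your proposal is correct and follows essentially the same route as the paper: the case-by-case expansion of $d\tilde\omega=\tilde\theta\wedge\tilde\omega$ reducing to condition \eqref{cons} is exactly the computation the paper carries out just before the theorem, and your arguments for the representation property of $\rho$ and for evaluating $L_A\tilde\omega$ on pairs in $V\times V$ to force $\tilde\theta(A)=0$ match the paper's proof step for step.
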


\begin{proof}
As we mentioned above, $(\tilde\omega, \tilde\theta)$ is a LCS structure on $\g$ if and only if for any $X\in\h$ we have
$\pi(X)=-\frac12\theta(X)\I+\rho(X)$ with $\rho(X)\in\mathfrak{sp}(V,\omega_0)$. We have to check next that $\rho(X): \h\to \mathfrak{sp}(V,\omega_0)$ is a representation. We compute 
\begin{align*}
\pi([X,Y]) & = [\pi(X),\pi(Y)]\\
           & = [S(X)+\rho(X),S(Y)+\rho(Y)]\\
           & = [\rho(X),\rho(Y)],
\end{align*}
since $S(X)=-\frac12\theta(X)\I$. On the other hand we have that 
\begin{align*}
\pi([X,Y]) & = S([X,Y])+\rho([X,Y])\\
           & = -\frac12\theta([X,Y])+\rho([X,Y])\\
           & = \rho([X,Y]),
\end{align*}
since $\theta$ is closed and the first part of the result follows.

\smallskip
Finally, we see that $(\tilde\omega, \tilde\theta)$ is a LCS structure of the second kind. Indeed, let $X\in\g_{\tilde\omega}$, $X=H+Y$ with $H\in\h$ and $Y\in V$. Then $\tilde\omega([X,Z],W)+\tilde\omega(Z,[X,W])=0$ for any $Z,W\in\g$. In particular, for any $Z,W\in V$ we have:
	\begin{align*}
	0 & = \tilde\omega([X,Z],W)+\tilde\omega(Z,[X,W])\\
	& = \omega_0([H+Y,Z],W) + \omega_0(Z,[H+Y,W])\\
	& = \omega_0(\pi(H)Z,W) + \omega_0(Z,\pi(H)W)\\
	& = \omega_0(-\frac{1}{2}\theta(H)Z + \rho(H)Z,W) + \omega_0(Z, -\frac{1}{2}\theta(H)W + \rho(H)W)\\
	& = -\frac{1}{2}\theta(H)\omega_0(Z,W) + \omega_0(\rho(H)Z,W) -\frac{1}{2}\theta(H)\omega_0(Z,W) + \omega_0(Z,\rho(H)W)\\
	& = -\theta(H)\omega_0(Z,W),
	\end{align*}
	for any $Z,W\in V$, where we used that $\rho(H)$ is $\omega_0$-skew-symmetric in the last equality.
	Since $\omega_0$ is non degenerate on $V$ we can choose $Z,W\in V$ such that $\omega_0(Z,W)\neq0$, and therefore we get that $\theta(H)=0$ which implies $\theta(X)=0$. Then $\g_{\tilde\omega}\subset\ker\theta$, thus $\theta|_{\g_{\tilde\omega}}\equiv0$ and therefore we have that the LCS structure $(\tilde\omega,\tilde\theta)$ is of the second kind.
\end{proof}


\Cref{pi=tita-ro} provides us with a method to build new examples of Lie algebras equipped with a LCS structure of the second kind, starting with a LCS Lie algebra and a suitable representation. Note that the LCS structure on the initial Lie algebra can be of the first or of the second kind. We believe that this method is interesting because, as we mentioned before, there are not many general results about LCS structures of the second kind.

\begin{obs}
	Moreover, we can see that the LCS structure in Theorem \ref{pi=tita-ro} is non-exact. Indeed, suppose $(\tilde\omega, \tilde\theta)$ is an exact LCS structure. Then $\omega_0$, i.e. the restriction of $\tilde\omega$ to $\R^{2n}$, is zero, which is a contradiction since $\omega_0$ is non-degenerate.
\end{obs}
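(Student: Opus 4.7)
The plan is to prove the remark by contradiction, by restricting the $d_{\tilde\theta}$-exactness equation to the abelian ideal $V \subset \g$ and observing that both terms on the right vanish there.

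First I would assume $(\tilde\omega,\tilde\theta)$ is exact and write $\tilde\omega = d_{\tilde\theta}\eta = d\eta - \tilde\theta\wedge\eta$ for some $\eta\in\g^*$. Then I would evaluate this identity on an arbitrary pair $Y,Z\in V$. On the left the value is $\tilde\omega(Y,Z) = \omega_0(Y,Z)$ by the definition of $\tilde\omega$.

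Next I would compute the two terms on the right. Since $\g = \h\ltimes_\pi V$ is a semidirect product with $V$ an abelian ideal, $[Y,Z]=0$ in $\g$ for all $Y,Z\in V$, hence
\[
d\eta(Y,Z) = -\eta([Y,Z]) = 0.
\]
For the other term, the definition $\tilde\theta|_V = 0$ gives $\tilde\theta(Y) = \tilde\theta(Z) = 0$, so
\[
(\tilde\theta\wedge\eta)(Y,Z) = \tilde\theta(Y)\eta(Z) - \tilde\theta(Z)\eta(Y) = 0.
\]
Combining the three computations yields $\omega_0(Y,Z) = 0$ for all $Y,Z\in V$, contradicting the assumed non-degeneracy of $\omega_0$ on $V$.

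The argument is short and essentially routine; there is no real obstacle, since the construction has been rigged so that $\tilde\theta$ vanishes identically on the abelian ideal where $\omega_0$ is concentrated. The only point worth flagging is the structural observation that $V$ is an abelian ideal of $\g$, which is what kills the $d\eta$ term; everything else is immediate from the definitions of $\tilde\omega$ and $\tilde\theta$ given in Theorem \ref{pi=tita-ro}.
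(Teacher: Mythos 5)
Your proof is correct and follows exactly the same route as the paper's remark: assume $d_{\tilde\theta}$-exactness and restrict to the abelian ideal $V$, where both $d\eta$ (because $V$ is abelian) and $\tilde\theta\wedge\eta$ (because $\tilde\theta|_V=0$) vanish, forcing $\omega_0=0$. In fact you make explicit the computation that the paper compresses into the single sentence ``Then $\omega_0$ is zero,'' so your write-up is, if anything, more complete than the original.
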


\begin{obs}
	This method can also be used to construct Lie algebras admitting other kind of structures. For example, if we start with a symplectic Lie algebra $(\h,\omega)$, that is $\theta=0$, then it is clear that we obtain a new symplectic Lie algebra $(\g,\tilde\omega)$.
\end{obs}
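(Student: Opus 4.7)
The plan is to specialize Theorem \ref{pi=tita-ro} to the degenerate case $\theta=0$ and verify that the resulting LCS data $(\tilde\omega,\tilde\theta)$ collapses to a symplectic structure. The observation is essentially a corollary of the previous theorem once we unpack what happens to the Lee form and the LCS representation condition when $\theta$ vanishes.

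First, I would note that a symplectic Lie algebra $(\h,\omega)$ is the special case $\theta=0$ of an LCS Lie algebra, since \eqref{lcs} reads $d\omega=\theta\wedge\omega=0$, which is precisely closedness of $\omega$. Thus the hypotheses of Theorem \ref{pi=tita-ro} apply, and any representation $\pi:\h\to\operatorname{End}(V)$ making $(\tilde\omega,\tilde\theta)$ an LCS structure on $\g=\h\ltimes_\pi V$ is an LCS representation.

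Next, I would specialize the LCS representation condition. Since $\theta=0$, the equation $\pi(X)=-\tfrac{1}{2}\theta(X)\I+\rho(X)$ forces $\pi(X)=\rho(X)\in\mathfrak{sp}(V,\omega_0)$ for every $X\in\h$. So the hypothesis on $\pi$ in this setting is simply that $\pi$ is a symplectic representation of $\h$ on $(V,\omega_0)$.

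Finally, I would invoke the definition of $\tilde\theta$ from the theorem: $\tilde\theta|_\h=\theta=0$ and $\tilde\theta|_V=0$, so $\tilde\theta\equiv 0$ on $\g$. Theorem \ref{pi=tita-ro} gives $d\tilde\omega=\tilde\theta\wedge\tilde\omega=0$, so $\tilde\omega$ is closed, and non-degeneracy of $\tilde\omega$ is already part of the construction (it is the direct sum of the non-degenerate forms $\omega$ on $\h$ and $\omega_0$ on $V$, with $\h$ and $V$ mutually $\tilde\omega$-orthogonal). Therefore $(\g,\tilde\omega)$ is a symplectic Lie algebra, as claimed. There is no real obstacle here; the only point to verify is that the $\theta=0$ specialization of the LCS representation condition is exactly the standard condition $\pi(\h)\subseteq\mathfrak{sp}(V,\omega_0)$ for a symplectic semidirect product.
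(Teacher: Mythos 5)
Your proposal is correct and follows exactly the reasoning the paper leaves implicit: the remark is stated without proof precisely because setting $\theta=0$ in Theorem \ref{pi=tita-ro} reduces the LCS representation condition to $\pi(X)=\rho(X)\in\mathfrak{sp}(V,\omega_0)$ and forces $\tilde\theta\equiv 0$, so that $d\tilde\omega=\tilde\theta\wedge\tilde\omega=0$ and $(\g,\tilde\omega)$ is symplectic. Your verification of non-degeneracy via the $\tilde\omega$-orthogonal splitting $\g=\h\oplus V$ is the same observation built into the construction, so nothing further is needed.
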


	The LCS Lie algebra $(\g,\tilde\omega, \tilde\theta)$ constructed in Theorem \ref{pi=tita-ro} has an abelian ideal which is non degenerate with respect to the restriction of the fundamental form $\tilde\omega$, namely, $(V,\omega_0)$. Moreover, it is contained in $\ker\tilde\theta$.
We show next a sort of converse of Theorem \ref{pi=tita-ro}. More precisely, we prove that any LCS Lie algebra with a non degenerate abelian ideal can be constructed as in Theorem \ref{pi=tita-ro}.

\

Let $\g$ be a Lie algebra endowed with a LCS structure $(\omega', \theta')$. Let $\u$ be a non degenerate ideal, and we consider the complement $\u^\perp$ given by $\u^\perp=\{X\in \g:  \omega'(X,U)=0, \forall U\in\u\}$. Since $\u$ is non degenerate we have that $\g=\u^\perp\oplus\u$ as vector spaces.

\begin{thm}\label{converse}
	Let $(\g,\omega',\theta')$ be a LCS Lie algebra admitting a non degenerate ideal $\u$: 
	\begin{enumerate}[(i)]
		\item if $\u\subset\ker\theta'$, then $\u^\perp$ is a subalgebra;
		\item if, moreover, $\u$ is abelian, then $\ad: \u^\perp \to \operatorname{End}(\u)$ is a LCS representation and $(\g,\omega',\theta')$ is isomorphic to $(\u^\perp\ltimes\u,\tilde\omega,\tilde\theta)$ with the LCS structure of Theorem \ref{pi=tita-ro}. In particular, $(\omega',\theta')$ is a LCS structure is of the second kind.
	\end{enumerate}	
\end{thm}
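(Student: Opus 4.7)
The plan is to verify both parts by a clean application of the defining LCS identity $d\omega'=\theta'\wedge\omega'$ evaluated on various triples that mix elements of $\u$ and $\u^\perp$, using the standard Chevalley--Eilenberg formula
\[
d\omega'(X,Y,Z)=-\omega'([X,Y],Z)+\omega'([X,Z],Y)-\omega'([Y,Z],X).
\]

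For (i) I would take $X,Y\in\u^\perp$ and $U\in\u$. On the right-hand side of the LCS identity,
\[
(\theta'\wedge\omega')(X,Y,U)=\theta'(X)\omega'(Y,U)-\theta'(Y)\omega'(X,U)+\theta'(U)\omega'(X,Y)
\]
vanishes: the first two terms by the definition of $\u^\perp$, the third by the hypothesis $\u\subset\ker\theta'$. On the left-hand side, the terms $\omega'([X,U],Y)$ and $\omega'([Y,U],X)$ vanish because $[X,U],[Y,U]\in\u$ (since $\u$ is an ideal) while $X,Y\in\u^\perp$. What remains is $-\omega'([X,Y],U)=0$ for all $U\in\u$, and non-degeneracy of $\omega'$ on $\u$ yields $[X,Y]\in\u^\perp$.

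For (ii), assume $\u$ abelian. Since $\u$ is non-degenerate for $\omega'$, the decomposition $\g=\u^\perp\oplus\u$ is a direct sum of vector spaces, and the restriction $\omega_0:=\omega'|_\u$ is a symplectic form; by (i), $\omega:=\omega'|_{\u^\perp}$ is then a non-degenerate $2$-form on the subalgebra $\u^\perp$. Because $\u$ is abelian and an ideal, the Lie bracket on $\g$ matches the semidirect product bracket of $\u^\perp\ltimes_{\ad}\u$. To check that $(\omega,\theta)$ with $\theta:=\theta'|_{\u^\perp}$ is a LCS structure on $\u^\perp$, I would apply the identity above to triples $X,Y,Z\in\u^\perp$; closure of $\u^\perp$ under brackets makes $d\omega(X,Y,Z)=d\omega'(X,Y,Z)$ and similarly for $\theta\wedge\omega$, so the LCS equation on $\g$ descends.

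The key step is then applying $d\omega'=\theta'\wedge\omega'$ to a triple $(X,Y,Z)$ with $X\in\u^\perp$ and $Y,Z\in\u$. The term $\omega'([Y,Z],X)$ vanishes because $\u$ is abelian, and writing $\pi(X):=\ad(X)|_\u$ I get
\[
-\omega_0(\pi(X)Y,Z)+\omega_0(\pi(X)Z,Y).
\]
On the right-hand side, $\theta'(Y)=\theta'(Z)=0$ and $\omega'(X,Y)=\omega'(X,Z)=0$, leaving $\theta(X)\omega_0(Y,Z)$. This is precisely condition \eqref{cons}, so $\pi$ is a LCS representation. Finally, the identifications $\tilde\omega|_{\u^\perp}=\omega$, $\tilde\omega|_\u=\omega_0$, $\tilde\omega(\u^\perp,\u)=0$ and $\tilde\theta|_{\u^\perp}=\theta$, $\tilde\theta|_\u=0$ reproduce $(\omega',\theta')$ exactly, so $(\g,\omega',\theta')\cong(\u^\perp\ltimes_\pi\u,\tilde\omega,\tilde\theta)$ and Theorem \ref{pi=tita-ro} yields that the structure is of the second kind.

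There is no real obstacle; the only care needed is bookkeeping the vanishings in $(\theta'\wedge\omega')$ and $d\omega'$ for each choice of triple, in particular noting that the hypothesis $\u\subset\ker\theta'$ is used in both parts and that abelianness of $\u$ is needed only in (ii) so that $\pi=\ad$ becomes a representation on $\u$ itself and the $[Y,Z]$ term disappears.
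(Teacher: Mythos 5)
Your proof is correct and follows essentially the same route as the paper: part (i) by evaluating $d\omega'=\theta'\wedge\omega'$ on $(X,Y,U)$ with $X,Y\in\u^\perp$, $U\in\u$, and part (ii) by restricting to $\u^\perp$ and $\u$ and reducing to Theorem \ref{pi=tita-ro}. The only difference is that you explicitly verify condition \eqref{cons} on triples $(X,Y,Z)$ with $X\in\u^\perp$, $Y,Z\in\u$, a step the paper declares ``clear''; your bookkeeping there is accurate.
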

\begin{proof}
 $\ri$ We show $\u^\perp$ is a subalgebra, that is $\omega'([X,Y],U)=0$ for all $U\in \u$ and $X,Y\in \u^\perp$. We compute
 \begin{align*}
 \omega'([X,Y],U) & = -d\omega'(X,Y,U) -\omega'([Y,U],X)-\omega'(U,[X,Y])\\ 
 & = -\theta'(X)\omega'(Y,U)-\theta'(Y)\omega'(U,X)-\theta'(U)\omega'(X,Y)\\
 &=0,
 \end{align*}
 where we used the LCS condition and the fact that $\u\subset\ker\theta'$, thus $\ri$ is proved. 
 
 \medskip
 
 $\rii$ Let $(\omega,\theta)$ be the restriction of $(\omega',\theta')$ to $\u^\perp$. It is clear that $\omega$ is non degenerate on $\u^\perp$ and $\theta\neq0$ since $\u\subset\ker\theta'$ and $\theta'\neq0$. Therefore, $(\omega,\theta)$ satisfies the LCS condition on the subalgebra $\u^\perp$. 
 
We can decompose $\g$ as a semidirect product $\g=\u^\perp\ltimes\u$, and we denote the restriction of $\omega'$ to the non degenerate abelian ideal $\u$ by $\omega_0$. Then, it is clear that $\omega_0$ and $\ad: \u^\perp \to \operatorname{End}(\u)$ satisfy condition \eqref{cons}, since $(\omega',\theta')$ is exactly the LCS structure $(\tilde\omega,\tilde\theta)$  constructed in Theorem \ref{pi=tita-ro} with initial data $(\u^\perp, \omega, \theta)$ and $(\u, \omega_0)$. 
 
 Therefore, it follows from Theorem \ref{pi=tita-ro} that $\ad: \u^\perp \to \operatorname{End}(\u)$ is a LCS representation and $(\g,\omega',\theta')$ is isomorphic to $(\u^\perp\ltimes\u,\tilde\omega,\tilde\theta)$. Moreover, $(\omega',\theta')$ is a LCS structure of the seconk kind on $\g$.
\end{proof}


      
\begin{obs}
Using Theorem \ref{converse} it is easy now to determine whether a given Lie algebra endowed with a LCS structure can be constructed by Theorem \ref{pi=tita-ro}. This gives us a nice characterization of LCS Lie algebras admitting a non degenerate abelian ideal contained in the kernel of the Lee form. 	One can see Theorem \ref{converse} as a kind of reduction of the LCS condition. 
\end{obs}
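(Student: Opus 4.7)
The remark packages Theorems \ref{pi=tita-ro} and \ref{converse} into a single biconditional characterization, so the plan is to extract that biconditional explicitly and verify both implications, almost all of which is already in place.

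First I would formulate the characterization as: a LCS Lie algebra $(\g,\omega',\theta')$ is (isomorphic to one) obtained from Theorem \ref{pi=tita-ro} if and only if it contains a non-degenerate abelian ideal $\u$ with $\u\subset\ker\theta'$. For the forward implication I would simply inspect the output of Theorem \ref{pi=tita-ro}. If $\g=\h\ltimes_\pi V$ with the LCS structure $(\tilde\omega,\tilde\theta)$ as in that theorem, then $V$ is by construction an ideal (semidirect summand with trivial internal bracket), it is abelian, the restriction $\tilde\omega|_V=\omega_0$ is non-degenerate, and $\tilde\theta|_V=0$ so $V\subset\ker\tilde\theta$. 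Hence $\u:=V$ is a witness of the required form.

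For the reverse implication I would invoke Theorem \ref{converse} directly: given $\u\subset\g$ non-degenerate, abelian, and contained in $\ker\theta'$, part $\ri$ shows $\u^\perp$ is a subalgebra, part $\rii$ shows that $\ad\colon \u^\perp\to\operatorname{End}(\u)$ is a LCS representation and that $(\g,\omega',\theta')\cong (\u^\perp\ltimes_{\ad}\u,\tilde\omega,\tilde\theta)$, which is precisely the output format of Theorem \ref{pi=tita-ro}. Combining the two directions yields the characterization promised in the remark, and at the same time produces an effective procedure: to test whether a given LCS Lie algebra arises from the construction one looks for a non-degenerate abelian ideal annihilated by the Lee form.

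For the final sentence of the remark, the ``reduction'' interpretation follows by reading the correspondence as a bijection between the two kinds of data: on one side, a LCS Lie algebra with a distinguished non-degenerate abelian ideal $\u\subset\ker\theta'$; on the other, a quadruple $(\h,\omega,\theta,\pi)$ consisting of a lower-dimensional LCS Lie algebra together with a LCS representation of it on a symplectic vector space. Thus verifying the LCS condition on $\g$ reduces to verifying it on the strictly smaller $\h=\u^\perp$ plus checking the algebraic condition on $\pi$. There is no substantive obstacle here since both directions are immediate from the already-proved theorems; the only minor point to watch is that in the forward direction one must observe that $V$ really is an ideal (not merely a subalgebra), which uses the semidirect-product structure, and in the reverse direction one must remember that the hypothesis $\u\subset\ker\theta'$ is what makes Theorem \ref{converse}$\rii$ applicable, so the characterization is sharp exactly as stated.
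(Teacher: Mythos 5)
Your proposal is correct and follows the same route the paper itself takes: the forward implication is exactly the observation stated immediately before Theorem \ref{converse} (that $V$ is a non-degenerate abelian ideal contained in $\ker\tilde\theta$), and the reverse implication is Theorem \ref{converse}\rii. Making the biconditional explicit is a faithful unpacking of the remark, with no gaps.
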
 

\begin{obs}
 This construction has some similar ideas to the ones in \cite[Proposition 1.17]{ABP} since both are related with special abelian ideals contained in the kernel of the Lee form. In our work we look for a non degenerate abelian ideal instead of a Lagrangian abelian ideal.
\end{obs}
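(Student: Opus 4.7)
The final statement is a \textbf{Remark} (\texttt{obs}) rather than a theorem, lemma, proposition, or claim: it merely signals an informal analogy between the construction of Theorem \ref{pi=tita-ro} and \cite[Proposition 1.17]{ABP}, pointing out that both single out certain abelian ideals contained in the kernel of the Lee form, the difference being that \cite{ABP} requires a Lagrangian abelian ideal while the present paper requires a non-degenerate abelian ideal. There is no mathematical assertion here that admits, or requires, a proof in the usual sense, so the natural ``proof proposal'' is simply to justify the accuracy of the comparison.

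Concretely, the plan would be to place the two constructions side by side and verify that the claimed parallel is correct. First, I would recall the hypotheses of \cite[Proposition 1.17]{ABP}, recording that the abelian ideal featured there is Lagrangian with respect to the fundamental 2-form and sits inside $\ker\theta$. Next, I would recall that in Theorem \ref{pi=tita-ro} the summand $V$ appears as an abelian ideal of $\g=\h\ltimes_\pi V$ with $\tilde\omega|_V=\omega_0$ non degenerate and $\tilde\theta|_V=0$, so it is simultaneously non degenerate and contained in $\ker\tilde\theta$; and that Theorem \ref{converse} shows every LCS Lie algebra possessing a non degenerate abelian ideal $\u\subset\ker\theta'$ arises in this way. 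Putting these two observations together establishes the advertised similarity and the stated difference.

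Since the remark makes no formal mathematical claim beyond this comparison, there is no real obstacle: the ``hard part'' is purely expository, namely phrasing the analogy so that the reader sees precisely where the constructions run parallel (an abelian ideal inside the Lee kernel, giving a semidirect-product decomposition of the LCS Lie algebra) and where they diverge (Lagrangian versus non degenerate, which forces rather different compatibility conditions on the associated representation). Accordingly, my proposal is to leave the statement as a remark without a formal proof, exactly as the author has done.
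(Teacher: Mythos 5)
Your assessment is correct and matches the paper exactly: the statement is an informal remark with no proof in the source, and your reading of where the two constructions parallel each other (abelian ideal in $\ker\theta$ yielding a semidirect-product decomposition) and where they diverge (Lagrangian versus non degenerate) is accurate. Nothing further is required.
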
 



 
\

\subsection{Center of LCS Lie algebras of the second kind} 

\


The center of a Lie algebra with a LCS structure was studied in \cite{ABP}, in particular the authors characterized the center of a nilpotent LCS Lie algebra and they proved that the dimension of the center is at most $2$. Note that the nilpotency condition implies that the LCS structure is of the first kind. On the other hand, as a consequence of \Cref{pi=tita-ro} it is easy to verify that there is no restriction for the dimension of the center of a Lie algebra admitting a LCS structure of the second kind, as we show in the following example.

\begin{ejemplo}
Consider the $4$-dimensional Lie algebra $\mathfrak r\mathfrak r_{3,\lambda}$  with structure constants $(0,-12,-\lambda 13,0)$.
It means that we fix a coframe $\{e^1,e^2,e^3,e^4\}$ for $(\mathfrak r\mathfrak r_{3,\lambda})^*$ such that $de^1=0$, $de^2=-e^1\wedge e^2$, $de^3=-\lambda e^1\wedge e^3$ and $de^4=0$.
According to \cite{ABP} this Lie algebra admits a LCS structure given by $\omega=e^{12}+ e^{34}$ with Lee form $\theta=-\lambda e^1$, where $\{e^1,e^2,e^3,e^4\}$ denotes the dual coframe. Consider now the $(2n+4)$-dimensional Lie algebra $\g=\mathfrak r\mathfrak r_{3,\lambda}\ltimes_\pi\R^{2n}$ where $\pi(e_i)=0$ for $i=2,3,4$ and $\pi(e_1)\in M(2n,\R)$ is given by 
\[\pi(e_1)=\begin{pmatrix}
\frac\lambda 2\I_{n\times n} & \\
&\frac\lambda 2\I_{n\times n}\\
 \end{pmatrix}+
\begin{pmatrix}
\frac\lambda 2\I_{n\times n} & \\
&-\frac\lambda 2\I_{n\times n}\\
\end{pmatrix}
=\begin{pmatrix}
\lambda \I_{n\times n} & \\
&\operatorname{0}_{n\times n}\\
\end{pmatrix},\]
in a basis $\{u_1,\dots,u_n,v_1,\dots,v_n\}$ of $\R^{2n}$. More precisely, the Lie brackets on $\g$ are:
$$[e_1,e_2]=e_2, \quad [e_1,e_3]=\lambda e_3, \quad [e_1,u_k]=\lambda u_k,$$
for $k=1,\dots,n$. Then $e_4\in\mathfrak z(\g)$ and $v_i \in\mathfrak z(\g)$ for $i=1,\dots,n$, therefore the dimension of $\mathfrak z(\g)$ is $n+1$.
Moreover, it can be seen that $\pi$ is a LCS representation, hence according to \Cref{pi=tita-ro} it determines a LCS structure on the Lie algebra $\g$ given by $\tilde\omega=e^{12}+ e^{34} + \sum_{i=1}u^i\wedge v^i$ with Lee form $\tilde\theta=-\lambda e^1$. 
\end{ejemplo}

\

\subsection{Unimodular LCS Lie algebras of the second kind}

\

Since we are interested in finding examples of solvmanifolds equipped with a LCS structure of the second kind, we determine next when a Lie algebra $\g$ built in Theorem \ref{pi=tita-ro} is unimodular.

\begin{prop}\label{unimodularLCS}
Let $\h$ be a Lie algebra with a LCS structure  $(\omega,\theta)$, $(\pi,V)$ a $2n$-dimensional LCS representation of $\h$ and $\g=\h\ltimes_\pi V$ the Lie algebra with LCS structure  $(\tilde\omega,\tilde\theta)$ built as above. Then $\g$ is unimodular if and only if  $\tr(\ad_X^\h)=n\theta(X)$ for any $X\in\h$.
\end{prop}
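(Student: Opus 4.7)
The plan is to exploit the direct sum decomposition $\g=\h\oplus V$ (as vector spaces) to compute $\tr(\ad_Z^\g)$ blockwise, splitting $Z=X+Y$ with $X\in\h$, $Y\in V$. Recall that a Lie algebra is unimodular precisely when $\tr(\ad_Z)=0$ for every $Z$.

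First I would handle the case $Y\in V$. Since $V$ is abelian and is an ideal, $\ad_Y$ vanishes on $V$, while on $\h$ it sends $H\mapsto[Y,H]=-\pi(H)Y\in V$. Thus in the block form with respect to $\g=\h\oplus V$, the operator $\ad_Y$ has only one nonzero block (the one from $\h$ into $V$), so the diagonal is zero and $\tr(\ad_Y^\g)=0$. Consequently unimodularity reduces to the condition $\tr(\ad_X^\g)=0$ for all $X\in\h$.

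Next I would compute $\ad_X^\g$ for $X\in\h$. Because $\h$ is a subalgebra and $V$ is an ideal, $\ad_X^\g$ preserves the decomposition $\g=\h\oplus V$, acting as $\ad_X^\h$ on $\h$ and as $\pi(X)$ on $V$. Hence
\[
\tr(\ad_X^\g)=\tr(\ad_X^\h)+\tr(\pi(X)).
\]
Since $\pi$ is a LCS representation, $\pi(X)=-\tfrac12\theta(X)\I_V+\rho(X)$ with $\rho(X)\in\mathfrak{sp}(V,\omega_0)$. Symplectic endomorphisms are traceless, so $\tr(\rho(X))=0$, and therefore
\[
\tr(\pi(X))=-\tfrac12\theta(X)\cdot\dim V=-n\,\theta(X).
\]
Plugging back, $\tr(\ad_X^\g)=\tr(\ad_X^\h)-n\,\theta(X)$, and this vanishes for every $X\in\h$ if and only if $\tr(\ad_X^\h)=n\,\theta(X)$ for every $X\in\h$, which is the claimed characterization.

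There is essentially no obstacle here: the whole argument is a trace computation once one notes that the symplectic condition on $\rho(X)$ forces $\tr(\rho(X))=0$, and that the $V$-part of $\g$ contributes nothing to unimodularity because $V$ is an abelian ideal. The only thing to be careful about is the bookkeeping of dimensions (the factor $n$ coming from $\dim V=2n$ together with the $-\tfrac12$ in the LCS decomposition of $\pi$).
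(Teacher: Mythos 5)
Your proof is correct and follows essentially the same route as the paper: block-decompose $\ad_X^\g$ into $\ad_X^\h$ and $\pi(X)$, then use that $\rho(X)\in\mathfrak{sp}(V,\omega_0)$ is traceless so that $\tr(\pi(X))=-n\theta(X)$. The only difference is that you explicitly check $\tr(\ad_Y^\g)=0$ for $Y\in V$, a point the paper leaves implicit; this is a welcome bit of completeness but not a different argument.
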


\begin{proof}
Given $X\in\h$, the operator $\ad_X^\g:\g\to\g$ can be written as
\[\ad_X^\g=
\left(\begin{array}{c|c}
\ad_X^\h & \\
\hline
& \pi(X) \\
\end{array}\right),\]
for some bases of $\h$ and $V$. Then we have that $\g$ is unimodular if and only if $\tr(\pi(X))=-\tr(\ad_X^\h)$ for all $X\in\h$, and using the characterization in \Cref{pi=tita-ro} this happens if and only if \[\tr(\ad_X^\h)= -\tr\left(-\frac12\theta(X)\I+\rho(X)\right)=n\theta(X),\] 
for any $X\in\h$. 
\end{proof}



In particular we have the following corollary, which will be used later:

\begin{cor}
Let $\h$ be a Lie algebra with a LCS structure  $(\omega,\theta)$. If there exists $n \in \N$ such that
\begin{equation}\label{unimodular-condition}
\tr(\ad_X^\h)=n\theta(X)
\end{equation}
for all $X\in\h$,
then for any LCS representation $(\pi, V)$ with $\dim V=2n$, the LCS Lie algebra $\g=\h\ltimes_\pi V$ is unimodular.
\end{cor}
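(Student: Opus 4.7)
The plan is to apply Proposition \ref{unimodularLCS} directly, since the corollary is essentially a restatement of it that isolates a condition depending only on $\h$, $\theta$, and the dimension of $V$.

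First, I would fix an integer $n \in \N$ satisfying the hypothesis \eqref{unimodular-condition}, and then take any LCS representation $(\pi, V)$ with $\dim V = 2n$. By Theorem \ref{pi=tita-ro} the semidirect product $\g = \h \ltimes_\pi V$ carries the LCS structure $(\tilde\omega, \tilde\theta)$ described there, so Proposition \ref{unimodularLCS} applies and yields that $\g$ is unimodular if and only if $\tr(\ad_X^\h) = n\theta(X)$ for every $X \in \h$. Since this is precisely the standing hypothesis \eqref{unimodular-condition}, unimodularity of $\g$ follows at once.

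There is essentially no obstacle: the whole content sits in Proposition \ref{unimodularLCS}, whose proof already pinned down the trace of $\ad_X^\g$ on the two diagonal blocks using the decomposition $\pi(X) = -\tfrac12 \theta(X)\I + \rho(X)$ with $\rho(X) \in \mathfrak{sp}(V,\omega_0)$ (so $\tr \rho(X) = 0$, giving $\tr \pi(X) = -n\theta(X)$). The corollary's only added remark worth flagging in the write-up is that condition \eqref{unimodular-condition} is intrinsic to $\h$ and does not depend on the specific LCS representation chosen, so a single check on the base algebra $\h$ produces a whole family of unimodular LCS Lie algebras in dimensions $\dim \h + 2n$.
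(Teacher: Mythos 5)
Your proposal is correct and follows exactly the route the paper intends: the corollary is stated as an immediate consequence of Proposition \ref{unimodularLCS}, and your argument simply fixes $n$ from the hypothesis and invokes that proposition's equivalence, with the trace computation ($\tr\rho(X)=0$ for $\rho(X)\in\mathfrak{sp}(V,\omega_0)$) matching the one in the proposition's proof. No gaps.
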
 

Therefore the method of Theorem \ref{pi=tita-ro} together with condition  \eqref{unimodular-condition} allow us to build unimodular Lie algebras admitting a LCS structure of the second kind starting from a non unimodular Lie algebra with a LCS structure.

\

\section{Examples of Lie algebras with LCS structures of the second kind}

In this section we show first that our construction is quite general. Indeed, we can reobtain with this construction most of the known examples of Lie algebras admitting a LCS structure of the second kind. 
More precisely, we see that every unimodular $4$-dimensional Lie algebra admitting a LCS structure of the second kind has a non degenerate abelian ideal contained in the kernel of the Lee form, and then by Theorem \ref{converse} they can be obtained with our construction. 
We also use Theorem \ref{pi=tita-ro} to construct new examples of unimodular Lie algebras admitting LCS structures of the second kind in higher dimension.

\


\subsection{Dimension $4$}

We start by recalling in Table \ref{4uniLCS} the unimodular $4$-dimensional Lie algebras admitting a LCS structure of the second kind (see \cite{ABP}).

\begin{table}[h]
	\def\arraystretch{1}
	\centering
	{\resizebox{\textwidth}{!}{
			\begin{tabular}{l|l|c}
				Lie algebra &  structure equations & LCS structure of the second kind\\
				\toprule 
				\hline
				
				$\mathfrak r\mathfrak r_{3,-1}$ &  	\multirow{2}{*}{$(0,-12, 13,0)$} & $\theta=  e^1$  \\
				&&$\omega=e^{12}+e^{34}$ \\
				\hline

				&  \multirow{6}{*}{$(14,\alpha 24,-(1+\alpha) 34,0)$} & $\theta= \alpha e^4$,  $\alpha\neq -\frac12$ \\
				$\mathfrak r_{4,\alpha,-(1+\alpha)}$&&$\omega=e^{13}+e^{24}$\\
				\cline{3-3}
				{\small $-1<\alpha\leq-(1+\alpha)\leq1$}&  & $\theta= e^4$  \\
				{\small $\alpha\neq 0$}& & $\omega=e^{14}+e^{23}$\\
				\cline{3-3}
				& & $\theta= -(1+\alpha)e^4$  \\
				&&$\omega=e^{12}+e^{34}$ \\
				\hline

				$\mathfrak r'_{4,-\frac12,\delta}$ &  	\multirow{2}{*}{$(14,-\frac12 24+\delta 34,-\delta 24-\frac12 34,0)$} & $\theta=  e^4$  \\
				{\small$\delta>0$} & & $\omega=e^{14}\pm e^{23}$ \\
				\hline
				
				&  \multirow{4}{*}{$(14,-24,-12,0)$} & $\theta= e^4$ \\
				$\mathfrak d_4$&&$\omega=e^{12}-e^{34}+e^{24}$ \\
				\cline{3-3}
				& & $\theta= e^4$  \\
				& & $\omega=\pm e^{14}+e^{23}$\\
				
				\hline

				\bottomrule
			\end{tabular}
	}}
	\caption{Unimodular $4$-dimensional Lie algebras admitting LCS structure of second kind}
	\label{4uniLCS}
\end{table}

We explain in details the first case in Table \ref{4uniLCS}. Let $\g=\mathfrak r\mathfrak r_{3,-1}$ with the LCS structure 
$\omega=e^{12}+e^{34}, \theta= e^1$. It is clear that $\u=\text{span}\{e_3,e_4\}$ is a non degenerate abelian ideal of $\g$. It follows from Theorem \ref{converse} that $\g=\u^\perp\ltimes\u$ where $\u^\perp=\text{span}\{e_1,e_2\}$ is isomorphic to the $2$-dimensional non abelian Lie algebra $\mathfrak{aff}(\R)$. The LCS structure on $\u^\perp$ is given by $\omega=e^{12}, \theta=  e^1$, note that since $\dim\u^\perp=2$, this structure is in fact a symplectic structure. Therefore $\mathfrak r\mathfrak r_{3,-1}\simeq\mathfrak{aff}(\R)\ltimes_{\pi_1} \R^{2}$ with $\pi_1(e_1)=\operatorname{diag}(-1,0)=\operatorname{diag}(-\frac12,-\frac12)+\operatorname{diag}(-\frac12,\frac12)$ and $\pi_1(e_2)=0$. Clearly, $\pi_1$ is an LCS representation.

\smallskip

It can be easily seen that for any of the Lie algebras and any of the LCS structures of Table \ref{4uniLCS} we can proceed in the same way with the exception of $\mathfrak d_4$ with the LCS structure $\omega=e^{12}-e^{34}+e^{24}$ and $\theta= e^4$. Indeed, this is the only LCS structure of the second kind on a $4$-dimensional unimodular Lie algebra which does not satisfy the condition of Theorem \ref{converse}, that is, it does not have a non degenerate abelian ideal contained in $\ker\theta$. 

To summarize we have the following result:

\begin{prop}
	Any unimodular $4$-dimensional LCS Lie algebra of the second kind, with the only exception of $(\mathfrak d_4, \omega=e^{12}-e^{34}+e^{24}, \theta=e^4)$, can be reobtained by Theorem \ref{pi=tita-ro} for a suitable representation.
\end{prop}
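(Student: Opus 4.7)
The plan is to apply Theorem \ref{converse}(ii). For each row of Table \ref{4uniLCS}, excluding only the exceptional LCS structure on $\mathfrak d_4$, I will exhibit a two dimensional non degenerate abelian ideal $\u$ of $\g$ contained in $\ker \theta$. Once such a $\u$ is identified, Theorem \ref{converse}(ii) automatically identifies $(\g,\omega,\theta)$ with a Lie algebra built via Theorem \ref{pi=tita-ro} from the semidirect decomposition $\g = \u^\perp \ltimes \u$ together with the adjoint action $\ad\colon \u^\perp \to \operatorname{End}(\u)$, which is automatically a LCS representation by the second part of Theorem \ref{converse}. The first row $\mathfrak{rr}_{3,-1}$ is already treated explicitly after Table \ref{4uniLCS} with $\u = \text{span}\{e_3, e_4\}$, so only the remaining rows need to be analyzed.

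In every one of those rows the Lee form $\theta$ is a scalar multiple of $e^4$, so $\ker\theta = \text{span}\{e_1, e_2, e_3\}$. For $\mathfrak r_{4,\alpha,-(1+\alpha)}$ the subspace $\ker\theta$ is already abelian (every nonzero bracket involves $e_4$), and the three LCS structures listed are handled respectively by the choices $\u = \text{span}\{e_1, e_3\}$, $\text{span}\{e_2, e_3\}$ and $\text{span}\{e_1, e_2\}$, each of which is $\ad_{e_4}$-invariant (hence an ideal) and supports a non degenerate restriction of $\omega$. For $\mathfrak r'_{4,-1/2,\delta}$ the choice $\u = \text{span}\{e_2, e_3\}$ works: although $e_2$ and $e_3$ are not individually eigenvectors of $\ad_{e_4}$, the plane they span is $\ad_{e_4}$-invariant, $\omega|_\u = \pm e^{23}$ is non degenerate, and $\u \subset \ker\theta$. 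For $\mathfrak d_4$ with $\omega = \pm e^{14} + e^{23}$ the same $\u = \text{span}\{e_2, e_3\}$ is abelian (since $[e_2, e_3] = 0$) and is an ideal (the only bracket entering $\u$ from outside is $[e_1, e_2] = e_3 \in \u$), and once more $\omega|_\u$ is non degenerate.

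The bulk of the work is the routine verification, for each of the above choices, of the four conditions required by Theorem \ref{converse}(ii): $\u$ is an ideal, $\u$ is abelian, $\u \subset \ker\theta$, and $\omega|_\u$ is non degenerate. The only genuine difficulty is conceptual rather than computational, and it concerns the excluded structure $(\mathfrak d_4, \omega = e^{12}-e^{34}+e^{24}, \theta = e^4)$: because of the bracket $[e_1, e_2] = e_3$, the only abelian ideals of $\mathfrak d_4$ lying inside $\ker\theta = \text{span}\{e_1, e_2, e_3\}$ are $\text{span}\{e_1, e_3\}$ and $\text{span}\{e_2, e_3\}$, and this particular $\omega$ vanishes identically on both of them. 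Thus no non degenerate abelian ideal in $\ker\theta$ exists in that case, which confirms that the exclusion in the statement is sharp and marks exactly the boundary of what the construction of Theorem \ref{pi=tita-ro} can reach.
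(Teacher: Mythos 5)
Your proposal is correct and follows exactly the paper's approach: the paper works out the $\mathfrak{rr}_{3,-1}$ case explicitly, asserts that the remaining entries of Table \ref{4uniLCS} are handled "in the same way" by exhibiting a non degenerate abelian ideal in $\ker\theta$ and invoking Theorem \ref{converse}, and notes that the excluded structure on $\mathfrak d_4$ admits no such ideal. You simply supply the explicit choices of $\u$ for each row (all of which check out) and the short argument for sharpness of the exception, which the paper leaves implicit.
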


According to \cite{ABP} the simply connected Lie groups associated with the Lie algebras of Table \ref{4uniLCS} (for a countable set of parameters $\alpha$ and $\delta$) admit lattices (see also \cite{B}).
Then we have that:

\begin{cor}
	Any unimodular  $4$-dimensional LCS Lie algebra of the second kind (except for $(\mathfrak d_4, e^{12}-e^{34}+e^{24}, e^4)$) associated with a compact solvmanifold can be obtained by Theorem \ref{pi=tita-ro} for a suitable representation.
\end{cor}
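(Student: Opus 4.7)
The plan is to obtain the Corollary as an immediate consequence of the preceding Proposition together with the existence results for lattices recalled in the paragraph just above its statement. First I fix a unimodular $4$-dimensional LCS Lie algebra $(\g,\omega,\theta)$ of the second kind, different from the excluded pair $(\mathfrak d_4, e^{12}-e^{34}+e^{24}, e^4)$, and assume moreover that the associated simply connected Lie group $G$ admits a lattice $\Gamma$, so that $\Gamma\backslash G$ is a compact solvmanifold. The Proposition then tells us that $\g$ appears in Table \ref{4uniLCS} and admits a non-degenerate abelian ideal $\u$ contained in $\ker\theta$.

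Next I would apply Theorem \ref{converse}(ii) to this ideal. This yields a semidirect-product decomposition $\g \cong \u^\perp \ltimes_{\ad} \u$ in which $\ad\colon \u^\perp \to \operatorname{End}(\u)$ is an LCS representation, and the LCS structure $(\omega,\theta)$ corresponds, under this isomorphism, to the LCS structure $(\tilde\omega,\tilde\theta)$ produced by Theorem \ref{pi=tita-ro} from the initial data $(\u^\perp, \omega|_{\u^\perp}, \theta|_{\u^\perp})$ and the symplectic vector space $(\u, \omega|_\u)$. This exhibits $\g$, together with its LCS structure, as an output of the construction.

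The last point is that the hypothesis ``associated with a compact solvmanifold'' is non-vacuous for each of the relevant entries of Table \ref{4uniLCS}: for a countable set of values of $\alpha$ and $\delta$ the simply connected Lie groups in question admit lattices, which is precisely what is recalled from \cite{ABP} and \cite{B} immediately before the Corollary. Since all the technical work is carried out in the Proposition and in Theorem \ref{converse}, the Corollary is a bookkeeping consequence and I do not expect a serious obstacle; the only thing worth double-checking is that the case excluded by the Proposition really is the only potential source of compact examples missed by the construction, which follows from the exhaustiveness of the classification underlying Table \ref{4uniLCS}.
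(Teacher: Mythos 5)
Your proposal is correct and follows essentially the same route as the paper, which states the Corollary as an immediate consequence of the preceding Proposition (every non-excluded entry of Table \ref{4uniLCS} has a non-degenerate abelian ideal in $\ker\theta$, hence is covered by Theorem \ref{converse} and so by Theorem \ref{pi=tita-ro}) combined with the lattice-existence facts recalled from \cite{ABP} and \cite{B}. Your additional remark that the compactness hypothesis merely selects a subclass of algebras already handled by the Proposition, and that this subclass is non-empty, is exactly the intended bookkeeping.
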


Moreover, it follows from Theorem \ref{2tipo_cociente} that the induced LCS structures on any quotient are LCS structures of the second kind. 

\

\subsection{Higher dimension}

We focus now on provide examples in higher dimension.
In order to build examples of unimodular Lie algebras admitting LCS structures we need to start with a non unimodular Lie algebra with a LCS structure in lower dimension.
In \cite{ABP} the authors classify the $4$-dimensional solvable Lie algebras admitting LCS structures up to automorphism of the Lie algebra. Using their classification we show in Table \ref{extensibles} all the $4$-dimensional solvable Lie algebras with their associated LCS structure satisfying condition \eqref{unimodular-condition}, which means that they can be extended to a higher dimensional unimodular Lie algebra admitting a LCS structure given by \Cref{pi=tita-ro}.

\begin{table}[h]
	\def\arraystretch{1.4}
	\centering
	{\resizebox{\textwidth}{!}{
			\begin{tabular}{l|l|l|l|c}
				Lie algebra &  structure equations & LCS structure & parameters & $2n$-extension  \\
				\toprule 
				\hline
				
				$\mathfrak r\mathfrak r_{3,\lambda}$ &  	\multirow{2}{*}{$(0,-12,-\lambda 13,0)$} & $\theta= -\lambda e^1$  & \multirow{2}{*}{$\lambda\neq 0$} &  \multirow{2}{*}{$n=-\frac{1+\lambda}{\lambda}$} \\
				{\small$\lambda\neq -1$}&&$\omega=e^{12}+e^{34}$ &&\\
				\hline
				
				\multirow{2}{*}{$\mathfrak r_2\mathfrak r_2$} &  	\multirow{2}{*}{$(0,-12,0,-34)$} & $\theta= -\sigma(e^1+e^3)$  & \multirow{2}{*}{$\sigma\neq -\frac12, 0$} &  \multirow{2}{*}{$n=\frac{1}{\sigma}$} \\
				&&$\omega=-\frac{\sigma+1}{\sigma}e^{12}+e^{14}+e^{23}+\frac{\sigma+1}{\sigma}e^{34}$ &&\\
				\hline
				
				\multirow{2}{*}{$\mathfrak r'_2$} &  	\multirow{2}{*}{$(0,0,-13+24,-14-23)$} & $\theta= \sigma e^1$  & \multirow{2}{*}{$\sigma\neq -1,0$} &  \multirow{2}{*}{$n=\frac{2}{\sigma}$} \\
				&&$\omega=e^{13}-\frac{1}{\sigma+1}e^{24}$ &&\\
				\hline
				
				\multirow{4}{*}{$\mathfrak r_{4,\mu}$} &  	\multirow{4}{*}{$(14,\mu24+34,\mu34,0)$} & $\theta= -(\mu+1)e^4$  & \multirow{2}{*}{$\mu\neq -1,1$} &  \multirow{2}{*}{$n=-\frac{2\mu+1}{\mu+1}$} \\
				&&$\omega=e^{13}+e^{24}$ &&\\\cline{3-5}
				&& $\theta= -2\mu e^4$  & \multirow{2}{*}{$\mu\neq 0,1$} &  \multirow{2}{*}{$n=-\frac{2\mu+1}{2\mu}$} \\
				&&$\omega=e^{14}\pm e^{23}$ &&\\
				\hline
				
				&  \multirow{6}{*}{$(14,\alpha 24,\beta 34,0)$} & $\theta= -(1+\beta)e^4$  & \multirow{2}{*}{$\alpha\neq \beta$} &  \multirow{2}{*}{$n=-\frac{1+\alpha+\beta}{1+\beta}$} \\
				$\mathfrak r_{4,\alpha,\beta}$&&$\omega=e^{13}+e^{24}$ &&\\
				\cline{3-5}
				{\small $-1<\alpha\leq\beta\leq1$}&  & $\theta= -(\alpha+\beta)e^4$  & \multirow{2}{*}{$ \beta\neq 1$} &  \multirow{2}{*}{$n=-\frac{1+\alpha+\beta}{\alpha+\beta}$} \\
				{\small $\alpha\beta\neq 0$}& & $\omega=e^{14}+e^{23}$ & $\alpha+\beta\neq 0$&\\
				\cline{3-5}
			    & & $\theta= -(1+\alpha)e^4$  & \multirow{2}{*}{$\forall\alpha, \beta$} & \multirow{2}{*}{$n=-\frac{1+\alpha+\beta}{1+\alpha}$}\\
				&&$\omega=e^{12}+e^{34}$ &&\\
				\hline
				
				$\hat{\mathfrak r}_{4,\beta}$ &  	\multirow{2}{*}{$(14,-24,\beta34,0)$} & $\theta= -(1+\beta )e^4$  & \multirow{2}{*}{$\beta\neq -1$} &  \multirow{2}{*}{$n=-\frac{\beta}{1+\beta}$} \\
				{\small$-1\leq\beta<0$} & & $\omega=e^{13}+e^{24}$ &&\\
				\hline
				
		     	$\mathfrak r'_{4,\gamma,\delta}$ &  	\multirow{2}{*}{$(14,\gamma 24+\delta 34,-\delta 24+\gamma 34,0)$} & $\theta= -\gamma e^4$  & \multirow{2}{*}{$\gamma\neq 0$} &  \multirow{2}{*}{$n=-\frac{1+2\gamma}{2\gamma}$} \\
				{\small$\delta>0$} & & $\omega=e^{14}\pm e^{23}$ &&\\
				\hline

				&  \multirow{6}{*}{$(\lambda14,(1-\lambda)24,-12+34,0)$} & $\theta= \sigma e^4$  & $\lambda\neq 2$ &  \multirow{2}{*}{$n=\frac{2}{\sigma}$} \\
				& & $\omega=e^{12}-(\sigma +1)e^{34}$ & $\sigma\neq -1,0$ &\\
				\cline{3-5}
				$\mathfrak d_{4,\lambda}$ &  & $\theta= (\lambda-1)e^4$  & \multirow{2}{*}{$ \lambda\neq \frac12, 1$} &  \multirow{2}{*}{$n=\frac{2}{\lambda-1}$} \\
				{\small $\lambda\geq\frac12$} & & $\omega=e^{12}-\lambda e^{34}+e^{24}$ &&\\
				\cline{3-5}
				& & $\theta= (\lambda-2)e^4$  & \multirow{2}{*}{$ \lambda\neq \frac12, 2$} & \multirow{2}{*}{$n=\frac{2}{\lambda-2}$}\\
				&&$\omega=e^{14}\pm e^{23}$ &&\\
				\hline
								
				\multirow{2}{*}{$\mathfrak d'_{4,\delta}$} &  	\multirow{2}{*}{$(\frac{\delta}{2}14+24,-14+\frac{\delta}{2}24,-12+\delta34,0)$} & $\theta= \sigma e^4$  & $\delta>0$ &  \multirow{2}{*}{$n=\frac{2\delta}{\sigma}$} \\
				&&$\omega=\pm(e^{12}-(\delta+\sigma)e^{34})$ & $\sigma\neq -\delta, 0$ &\\
				\hline					
	          
	   			\multirow{2}{*}{$\mathfrak h_4$} &  	\multirow{2}{*}{$(\frac{\delta}{2}14+24,\frac12 24,-12+34,0)$} & $\theta= \pm\sigma e^4$  & \multirow{2}{*}{$\sigma\neq -1, 0$} &  \multirow{2}{*}{$n=\pm\frac{2}{\sigma}$} \\
	            &&$\omega=\pm(e^{12}-(\sigma+1)e^{34})$ &&\\
	            \hline
				
				\bottomrule
			\end{tabular}
	}}
	\caption{$4$-dimensional solvable LCS Lie algebras satisfying condition \eqref{unimodular-condition}.}
	\label{extensibles}
\end{table}

We explain in details how to extend one example. 

\begin{ejemplo}\label{Ejemplo r'2}
Let $\mathfrak r'_2$ be the Lie algebra with  structure constants $(0,0,-13+24,-14-23)$. 
According to \cite{ABP} this Lie algebra admits 4 non equivalent LCS structures up to Lie algebra automorphism 
$$ \left\{\begin{array}{l}
\theta = \sigma e^1+\tau e^2 \\
\omega =  e^{13} - \tau e^{14} -\frac{1+\tau^2}{1+\sigma} e^{24} \\
\text{with } \sigma\neq -1,0, \quad \tau>0
\end{array}\right. \quad \quad \;\;
\left\{\begin{array}{l}
\theta = -2 e^1 \\
\omega = \sigma e^{12} + e^{34} \\
\text{with } \sigma\neq 0
\end{array}\right.  $$
$$ \quad \left\{\begin{array}{l}
\theta = \tau e^2 \\
\omega = e^{13} - \tau e^{14} -(1+\tau^2) e^{24} \\
\text{with } \tau>0
\end{array}\right. \quad 
\left\{\begin{array}{l}
\theta = \sigma e^1 \\
\omega =  e^{13} -\frac{1}{1+\sigma} e^{24} \\
\text{with } \sigma\neq -1,0 
\end{array}\right.$$

\smallskip

We verify next if each LCS structure satisfies condition \eqref{unimodular-condition}.  If we consider the cases $\theta =\sigma e^1+\tau e^2 $, $\theta = \tau e^2$ or $\theta = -2 e^1$ the condition \eqref{unimodular-condition} does not hold, then the possible LCS extension will not be unimodular.
Finally we consider the LCS structure with Lee form $\theta = \sigma e^1$ and $\sigma\neq -1,0$. In this case \eqref{unimodular-condition} is satisfied for $2=n\sigma$.
Therefore for any $n\in \N$ and $\sigma=\frac 2n$, the LCS structure on $\mathfrak r'_2$ given by 
$$\left\{\begin{array}{l}
\theta = \sigma e^1 \\
\omega =  e^{13} -\frac{1}{1+\sigma} e^{24} \\
\text{with } \sigma\neq -1,0
\end{array}\right.$$
can be extended to the $(2n+4)$-dimensional unimodular Lie algebra
\[\g=\mathfrak r'_2\ltimes_\pi \R^{2n},\]
where $\pi$ is a suitable LCS representation.
It follows from \Cref{pi=tita-ro} that $\g$ admits a LCS structure $(\tilde\omega,\tilde\theta)$ given by $\tilde\omega|_\h=\omega$, $\tilde\omega|_{\R^{2n}}=\omega_0$, $\tilde\theta|_\h=\theta$ and $\tilde\theta|_{\R^{2n}}=0$, where $\omega_0$ is any symplectic form on $\R^{2n}$. To be more specific we consider $n=2$, then $\sigma=1$. Let $\pi:\mathfrak r'_2\to\mathfrak{gl}(4,\R)$ be the representation given by 
\[\pi(e_1)=\begin{pmatrix}
0 &&& \\
&-1&&\\
&&-1&\\
&&&0
\end{pmatrix}=-\frac12\I+\begin{pmatrix}
\frac12 &&& \\
&-\frac12&&\\
&&-\frac12&\\
&&&\frac12
\end{pmatrix},\]
and $\pi(e_2)=\pi(e_3)=\pi(e_4)=0$, in a basis $\{e_5,e_6,e_7,e_8\}$ of $\R^4$ with $\omega_0=e^{56}+e^{78}$. It is easy to see that $\pi$ is a LCS representation.  Then the $8$-dimensional Lie algebra $\g=\mathfrak r'_2\ltimes_\pi \R^4$ has the following Lie brackets
\[[e_1,e_3]=e_3, \quad [e_2,e_3]=e_4, \quad [e_1,e_6]=-e_6,\] 
\[[e_1,e_4]=e_4, \quad [e_2,e_4]=-e_3, \quad [e_1,e_7]=-e_7,\]
and the LCS structure is given by
$$\left\{\begin{array}{l}
\tilde\omega=e^{13}-\frac{1}{2}e^{24}+e^{56}+e^{78}\\
\tilde\theta= e^1.
\end{array}\right.$$
\end{ejemplo}

\begin{obs}
	Note that this example is not covered by the construction given in \cite[Proposition 1.8]{ABP}.  
\end{obs}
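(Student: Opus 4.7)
The observation claims that the $8$-dimensional LCS Lie algebra $(\g,\tilde\omega,\tilde\theta)$ obtained in Example \ref{Ejemplo r'2} does not lie in the image of the construction of \cite[Proposition 1.8]{ABP}. The strategy I would pursue is to identify a structural invariant that is forced on the output of \cite[Proposition~1.8]{ABP}, and then exhibit its failure for our $\g$.

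First, I would make precise the hypothesis and output of \cite[Proposition 1.8]{ABP}. The introduction lists three distinct constructions in \cite{ABP}: an exact-LCS one, a cosymplectic codimension-one extension, and a Lagrangian-ideal one. The earlier remark after Theorem \ref{converse} has already identified \cite[Proposition 1.17]{ABP} as the Lagrangian-ideal construction, so \cite[Proposition 1.8]{ABP} is expected to be either the exact-LCS or the cosymplectic-extension case. In either situation, the construction endows its output with a very specific piece of algebraic data: in the exact case, the defining condition $[\tilde\omega]_{\tilde\theta}=0$; in the cosymplectic case, a distinguished codimension-one ideal $\mathfrak{k}\subset\g$ together with a cosymplectic pair on $\mathfrak{k}$ compatible with $(\tilde\omega,\tilde\theta)$.

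Next, I would verify that the distinguished structure is absent from our example. In the exact case this is immediate: the remark following Theorem \ref{pi=tita-ro} already established that any LCS pair produced by our construction is non-exact, since the restriction $\omega_0$ of $\tilde\omega$ to $V$ is non-degenerate, in particular non-zero. In the cosymplectic-extension case, the natural candidate codim-one ideal is $\ker\tilde\theta=\operatorname{span}\{e_2,\ldots,e_8\}$; I would compute the restriction of $\tilde\omega$ to $\ker\tilde\theta$ together with any candidate Reeb-type $1$-form, and check directly that the resulting pair does not satisfy the cosymplectic compatibility required by \cite[Proposition 1.8]{ABP}.

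The main obstacle will be making the verification intrinsic rather than case-by-case: to avoid testing every codim-one subalgebra of $\g$, the cleanest line of attack is to isolate a single invariant of $(\g,\tilde\omega,\tilde\theta)$ — for instance the dimension of $\mathfrak{z}(\g)\cap\ker\tilde\theta$, or the structure of the nilradical of $\g$ intersected with $\ker\tilde\theta$ — that is rigidly pinned down by the construction of \cite[Proposition 1.8]{ABP} but takes a different value in our example. Once such an invariant is identified, the observation follows by a one-line comparison.
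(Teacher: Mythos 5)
The paper itself offers no argument for this remark --- it is stated as a bare assertion after Example \ref{Ejemplo r'2} --- so there is no ``paper's proof'' to match your attempt against; the only question is whether your attempt actually establishes the claim, and as written it does not. The central gap is that you never pin down what \cite[Proposition 1.8]{ABP} says. You hedge between two candidate readings (the exact-LCS construction and the cosymplectic codimension-one extension), you complete the argument only for the first (correctly, via the earlier remark that the output of Theorem \ref{pi=tita-ro} is always non-exact), and for the second --- which is in fact the relevant one, since the cosymplectic construction is the one in \cite{ABP} that produces \emph{non-exact} LCS structures and hence is the only one that could conceivably cover this example --- you only promise a computation (``I would compute\dots and check directly'') without performing it. Your closing paragraph then concedes the point: you say the observation ``follows by a one-line comparison'' \emph{once} a distinguishing invariant is identified, but no such invariant is ever identified or evaluated. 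A conditional plan of this form is not a proof of the statement.

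To actually close the argument you would need to (a) quote the hypotheses and output of \cite[Proposition 1.8]{ABP} precisely, and (b) carry out the verification on the explicit $8$-dimensional algebra $\g=\mathfrak r'_2\ltimes_\pi\R^4$ of Example \ref{Ejemplo r'2}, whose brackets and LCS pair $(\tilde\omega=e^{13}-\tfrac12 e^{24}+e^{56}+e^{78},\ \tilde\theta=e^1)$ are written out in full, so the check is entirely mechanical once the target condition is known. Note also that some of the invariants you float are not obviously discriminating: for instance $\ker\tilde\theta$ \emph{is} a codimension-one ideal of $\g$ (it contains $[\g,\g]=\operatorname{span}\{e_3,e_4,e_6,e_7\}$), so the mere existence of such an ideal cannot separate the example from a codimension-one-extension construction; the obstruction, if any, must come from the finer compatibility between $\tilde\omega$, $\tilde\theta$ and the induced structure on that ideal. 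Until that comparison is done, the remark remains unproved by your argument.
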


\

\section{Examples of solvmanifolds with LCS structures of the second kind}

\
In the section we use Theorem \ref{pi=tita-ro} and Theorem \ref{2tipo_cociente} to construct a family of solvmanifolds $\Gamma_m\backslash G$ admitting a LCS structure of the second kind, where $G$ is the simply connected Lie group associated to the Lie algebra considered in the Example \ref{Ejemplo r'2}. This Lie algebra can be decomposed as $\g=\R e_2\ltimes\R e_1\ltimes \R^{6}$ where the adjoint actions of $e_1$ and $e_2$ are given by
$$\ad_{e_1}=\begin{pmatrix}
1 &&&&& \\
&-1&&&&\\
&& 0&&&\\
&&&1 && \\
&&&&-1&\\
&&&&&0
\end{pmatrix}, \quad
\ad_{e_2}=\begin{pmatrix}
&&&& 0&&&\\
&&&&-1&&&\\
&&&& 0&&&\\
&&&& 0&&& \\
0&1&0& 0&0&0&0&\\
&&&& 0&&& \\
&&&& 0&&& 
\end{pmatrix}$$
in the reordered bases $\{e_3,e_6,e_5,e_4,e_7,e_8\}$ and $\{e_1,e_3,e_6,e_5,e_4,e_7,e_8\}$ respectively. The simply connected Lie group associated to $\g$ is 
$$G=\R e_2\ltimes_\psi(\R e_1\ltimes_\varphi \R^{6}),$$
where $\varphi(t)=\exp(t\ad_{e_1})$ and $\psi(t)=\exp(t\ad_{e_2})$. Next we build a lattice in $G$, to do this we start considering the Lie subgroup $H:=\R e_1\ltimes_\varphi \R^{6}$. Note that $H$ is an almost abelian Lie group. According to \cite{B} the Lie group $H$ admits a lattice if and only if there exists $t_0\in\R$, $t_0\neq0$, such that $\varphi(t_0)$ is conjugated to an integer matrix. 

We can write the matrix $\varphi(t)$ in the basis $\{e_3,e_6,e_5,e_4,e_7,e_8\}$ as 
$$\varphi(t)=\begin{pmatrix}
e^t &&&&& \\
&e^{-t} &&&&\\
&& 1&&&\\
&&&e^t  && \\
&&&&e^{-t}  &\\
&&&&&1
\end{pmatrix}.$$
We consider only the block
$$M=\begin{pmatrix}
e^t && \\
&e^{-t} &\\
&& 1
\end{pmatrix}.$$
The characteristic polynomial of the matrix $M$ is 
$$p(x)=(x-1)(x-e^t)(x-e^{-t}).$$
Fixing $m\in\mathbb{N}$, $m>2$, we define $t_m=\operatorname {arccosh}(\frac{m}{2})$, $t_m>0$, and we have that $(x-e^{t_m})(x-e^{-t_m})=x^2-mx+1\in\Z[x]$. Then the characteristic polynomial of $M$ for $t=t_m$ can be written as $p(x)=x^3-(m+1)x^2+(m+1)x-1$. Therefore, it is easy to see that $M$ is conjugated to the companion matrix $C_m$ of the polynomial $p$, that is, $M=Q_mC_mQ_m^{-1}$ where 
$$C_m=\begin{pmatrix}
0&0&1\\
1&0&-(1+m) \\
0&1&1+m 
\end{pmatrix} \quad \text{and} \quad 
Q_m=\begin{pmatrix}
1&e^{t_m}&e^{2t_m}\\
1&e^{-t_m}&e^{-2t_m} \\
1&1&1 
\end{pmatrix}.$$
We can copy this process in the second block of $\varphi(t)$ and we easily can check that 
$$\varphi(t_m)=P_mD_mP_m^{-1},$$ where
$$D_m=\begin{pmatrix}
C_m&0\\
0&C_m 
\end{pmatrix}\quad \text{and}\quad 
P_m=\begin{pmatrix}
Q_m&0\\
0&Q_m
\end{pmatrix}.$$
Therefore $\varphi(t_m)$ is conjugate to the integer matrix $D_m$. It follows from \cite{B} that $H$ admits a lattice  $$\Gamma_m=t_m\Z\ltimes P_m\Z^6.$$

\smallskip

We now write the matrix $\psi(t)$ in the basis $\{e_1,e_3,e_6,e_5,e_4,e_7,e_8\}$ as 
$$\psi(t)=\begin{pmatrix}
1 &0&0&0&0&0&0 \\
0&\cos(t) &0&0&-\sin(t)&0&0 \\
0&0&1&0&0&0&0\\
0&0&0&1&0&0&0\\
0&\sin(t) &0&0&\cos(t)&0&0 \\
0&0&0&0&0&1&0\\
0&0&0&0&0&0&1\\
\end{pmatrix}.$$

Clearly,  $\psi(2\pi)$ preserves the lattice $\Gamma_m$. Thus
$$\Lambda_m=2\pi\Z\ltimes\Gamma_m=2\pi\Z\ltimes (t_m\Z\ltimes P_m\Z^6)$$
is a lattice in $G$ for any $m>2$.
Therefore we obtain an explicit construction of examples of solvmanifolds $\Lambda_m\backslash G$ admitting a LCS structure of the second kind.

Since $\psi(2\pi)=\I$, we have that $\Lambda_m\simeq 2\pi\Z\times\Gamma_m$, and therefore it can be considered as a lattice in $G'=\R\times H$. Then, according to \cite[Theorem 3.6]{R} the corresponding solvmanifolds $M_m:=\Lambda_m\backslash G$ and $M_m':=\Lambda_m\backslash G'$ are diffeomorphic. Note that $M'$ is diffeomorphic to the product of $S^1$ and the solvmanifold $\Gamma_m\backslash H$. We also note that $G'$ can be seen as an almost abelian Lie group, more explicity, $G'=\R \ltimes_\rho \R^7$, where 
$$\rho(t)=\begin{pmatrix}
1 &&&&& \\
&e^{t} &&&&& \\
&&e^{-t} &&&&\\
&&& 1&&&\\
&&&&e^{t}  && \\
&&&&&e^{-t}  &\\
&&&&&&1
\end{pmatrix}.$$
It is easy to see that $\rho(t_m)$ is conjugated to the integer matrix 
$$R_m=\begin{pmatrix}
	1&0\\
	0&P_m
\end{pmatrix}.$$
Using this identification between $M_m$ and $M'_m$ we can prove that:

\begin{prop}
The solvmanifolds $\Lambda_m\backslash G$ are pairwise non homeomorphic.
\end{prop}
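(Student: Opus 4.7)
My plan is to distinguish the $M_m$ at the level of fundamental groups. Since $G'=\R\times H$ is a simply connected solvable Lie group, each $M'_m=\Lambda_m\backslash G'$ is aspherical; by the diffeomorphism $M_m\cong M'_m$ already established, so is $M_m$. Hence two of the $M_m$ can be homeomorphic only if $\Lambda_m\cong\Lambda_{m'}$ as abstract groups, where
\[
\Lambda_m\cong 2\pi\Z\,\times\,\bigl(t_m\Z\ltimes_{D_m}\Z^6\bigr).
\]
I will therefore aim to recover the integer matrix $R_m=\mathrm{diag}(1,D_m)\in GL_7(\Z)$ from $\Lambda_m$ in a way that is insensitive to the isomorphism.

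The key step is to identify the Fitting subgroup of $\Lambda_m$, and I expect it to equal $F_m:=2\pi\Z\oplus\Z^6\cong\Z^7$. The inclusion $F_m\subseteq\mathrm{Fitt}(\Lambda_m)$ is immediate since $F_m$ is abelian and normal. For the reverse, I will argue that no nilpotent normal subgroup $N$ can contain an element $(k,l,v)$ with $l\neq 0$: normality under conjugation by $(0,0,v')$ forces $N$ to contain the subgroup $\{0\}\oplus\{0\}\oplus(I-D_m^l)\Z^6$, and repeated commutators of $(k,l,v)$ with these elements yield, up to sign, $(0,0,(I-D_m^l)^{n}v')$ in the $n$-th term of the lower central series of $N$. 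Because the eigenvalues of $D_m$ are $1$ and $e^{\pm t_m}\neq1$, the operator $I-D_m^l$ is invertible on the $4$-dimensional complement of its kernel, so these commutators never reach $0$ for a generic $v'$, contradicting nilpotency. A short direct computation then shows that the conjugation action on $F_m$ of a generator of the quotient $\Lambda_m/F_m\cong\Z$ is exactly $R_m$, in the natural basis.

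Once this characteristic data is isolated, the rest is straightforward. Any isomorphism $\phi:\Lambda_m\to\Lambda_{m'}$ carries $F_m$ onto $F_{m'}$, giving $A\in GL_7(\Z)$, and induces an automorphism of $\Z\cong\Lambda_m/F_m$, namely multiplication by some $\varepsilon\in\{\pm1\}$. Compatibility of the conjugation actions gives $AR_mA^{-1}=R_{m'}^{\varepsilon}$, so $R_m$ and $R_{m'}^{\varepsilon}$ share their characteristic polynomial over $\C$. A direct calculation yields
\[
\det(xI-R_m)=(x-1)^3\bigl(x^2-mx+1\bigr)^2,
\]
which is palindromic, so $R_m^{-1}$ has the same characteristic polynomial as $R_m$ and the sign $\varepsilon$ is immaterial. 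Matching the two polynomials then forces $x^2-mx+1=x^2-m'x+1$, that is, $m=m'$.

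The step I expect to require most care is the computation of the Fitting subgroup together with the verification that the induced $\Z$-action on it is exactly $R_m$; everything afterwards reduces to tracking characteristic polynomials, and everything before is standard asphericity for solvmanifolds.
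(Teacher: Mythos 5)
Your proof is correct, and it reaches the same endgame as the paper --- showing that $R_m$ is similar to $R_n^{\pm 1}$ and then comparing spectra, so that $x^2-mx+1=x^2-nx+1$ forces $m=n$ --- but it gets there by a genuinely different route. The paper passes from the abstract isomorphism $\Lambda_m\cong\Lambda_n$ back to geometry: since $G'$ is completely solvable, Saito's rigidity theorem extends the isomorphism of lattices to an automorphism of $G'$, and then Huang's theorem on lattices in such semidirect products yields that the integer matrix $R_m$ is conjugate to $R_n$ or $R_n^{-1}$. You instead stay entirely inside the discrete groups: you identify the Fitting subgroup of $\Lambda_m$ as the rank-$7$ abelian normal subgroup $2\pi\Z\oplus\Z^6$ (your lower-central-series argument is sound, since for $l\neq 0$ the eigenvalues $e^{\pm l t_m}\neq 1$ make $I-D_m^l$ invertible off its $2$-dimensional kernel, so $(I-D_m^l)^n\Z^6\neq 0$ for all $n$), and you read off $R_m$ as the monodromy of the quotient $\Z$ acting on it by conjugation. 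This buys self-containedness --- no rigidity theorem, no appeal to Huang --- at the cost of an explicit group-theoretic computation, and it makes transparent why only the ambiguity $\varepsilon=\pm 1$ can arise. Two small remarks: the reduction to $\Lambda_m\cong\Lambda_{m'}$ needs only that $\pi_1(\Lambda_m\backslash G)\cong\Lambda_m$ because $G$ is simply connected, so asphericity is not actually required; and $\det(xI-R_m)=(x-1)^3(x^2-mx+1)^2$ is anti-palindromic rather than palindromic (one has $x^7p(1/x)=-p(x)$), but since $\det R_m=1$ the root multiset is still closed under inversion, $R_m^{-1}$ has the same characteristic polynomial, and your conclusion stands.
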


\begin{proof}
We assume that $M_m$ and $M_n$ are homeomorphic, then $M'_m$ and $M'_n$ are homeomorphic as well, and therefore their fundamental groups $\pi_1(M'_m)$ and $\pi_1(M'_n)$ are isomorphic.  Since $G'$ is simply connected, we have that these fundamental groups are isomorphic to the lattices, and therefore $\Lambda_m\cong\Lambda_n$. 
Since $G'$ is completely solvable, we can use the Saito’s
rigidity theorem \cite{Sai} to extend this isomorphism to an automorphism of $G'$.  
Since the lattices differ by an automorphism of $G'$, it follows from \cite[Theorem 3.6]{Hu} that the integer matrix $R_m$ is conjugated either to $R_n$ or to $R^{-1}_n$.
Finally comparing the eigenvalues of $R_m$ and $R_n$ we obtain that this happens if and only if $m=n$.
\end{proof}

We study now the de Rham and the Morse-Novikov cohomology of the solvmanifolds $\Lambda_m\backslash G\cong\Lambda_m\backslash G'$. We note first that the Lie algebra $\g'$ of $G'$ is given by $\g'=\R e_1\ltimes \R^{7}$ with 
$$\ad_{e_1}=\begin{pmatrix}
0 & &&&&&\\
&1 &&&&& \\
&&-1&&&&\\
&&& 0&&&\\
&&&&1 && \\
&&&&&-1&\\
&&&&&&0
\end{pmatrix}$$
in the basis $\{e_2,e_3,e_6,e_5,e_4,e_7,e_8\}$ of $\R^{7}$. Therefore the Lie group $G'$, unlike $G$, is completely solvable, that is, $\ad_X$ has real eigenvalues for all $X \in \g'$. 
Since $G'$ is completely solvable these cohomologies can be computed in terms of the Lie algebra cohomology. Indeed, it follows from \cite{Hat} (see also \cite{AO1}) that  $H^*_{dR}(\Lambda_m\backslash G')\cong H^*(\g') $ and  $H^*_\theta(\Lambda_m\backslash G')\cong H^*_\theta(\g')$ for any $m\in\N$.

According to \cite{Sa}, the $k^{th}$ Betti number of $\g'$, $\beta_k=\dim H^k(\g')$, can be computed in terms of the dimension of
$Z^j(\g')=\{\alpha\in\alt^j \g^* : d\alpha=0\}$ as follows:
\begin{equation}\label{betti} 
\beta_k=\dim H^k(\g')=\dim Z^k(\g')+\dim Z^{k-1}(\g')-\binom{8}{k-1},
\end{equation}
for $k>2$. Note that $\beta_0=1$ and $\beta_1=\dim(\g'/[\g',\g'])=4$. Using equation \eqref{betti} it can be seen that $\beta_2=10$, $\beta_3=20$ and $\beta_4=26$. Finally, due to Poincar\'e duality, we obtain that $\beta_5=20$, $\beta_6=10$, $\beta_7=4$ and $\beta_8=1$.

For the Morse-Novikov cohomology, the corresponding Betti numbers $\beta^\theta_k=\dim H_\theta^k(\g')$ satisfy a similar equation. Indeed, setting 
$Z^j_{\theta}(\g)=\{\alpha\in\alt^j \g^* : d_{\theta}\alpha=0\}$ we have
\begin{equation}\label{bettitilde}
\beta^ \theta_k=\dim H^k_{\theta}(\g')=\dim Z^k_{\theta}(\g')+\dim 
Z^{k-1}_{\theta}(\g')-\binom{8}{k-1},
\end{equation}
for $k>2$. It is easy to see that $\beta^ \theta_0=0$ and $\beta^ \theta_1=2$. Then, after some computations and using \eqref{bettitilde}, it can be seen that $\beta^ \theta_2=8$, $\beta^ \theta_3=14$, $\beta^ \theta_4=16$, $\beta^ \theta_5=14$, $\beta^ \theta_6=8$, $\beta^ \theta_2=2$ and $\beta^ \theta_8=0$.

\

\begin{small}
	\noindent{\itshape Acknowledgement.}
	I would like to thank A. Andrada for a careful reading of the first draft of the paper and useful remarks to improve the final version. Also thanks to D. Angella for his useful comments during his stay at Universidad Nacional de C\'ordoba. Finally, I would like to thanks I. Dotti for her useful suggestions to improve the exposition of this paper.
\end{small}


\bibliographystyle{plain}

\end{document}